\begin{document}

\bibliographystyle{amsplain}

\newtheorem{theorem}{Theorem}[section]
\newtheorem{prop}[theorem]{Proposition}
\newtheorem{lemma}[theorem]{Lemma}
\newtheorem{definition}[theorem]{Definition}
\newtheorem{corollary}[theorem]{Corollary}
\newtheorem{example}[theorem]{Example}
\newtheorem{remark}[theorem]{Remark}
\newcommand{\ra}{\rightarrow}
\renewcommand{\theequation}
{\thesection.\arabic{equation}}

\def\xint#1{\mathchoice
  {\xxint\displaystyle\textstyle{#1}}%
  {\xxint\textstyle\scriptstyle{#1}}%
  {\xxint\scriptstyle\scriptscriptstyle{#1}}%
  {\xxint\scriptscriptstyle\scriptscriptstyle{#1}}%
  \!\int}
\def\xxint#1#2#3{{\setbox0=\hbox{$#1{#2#3}{\int}$}
  \vcenter{\hbox{$#2#3$}}\kern-.5\wd0}}
\def\ddashint{\xint=}
\def\dashint{\xint-}

\def \RD {{\mathbb R}^n}
\def \HS {H^1_{w}({\mathbb R}^n)}
\def\HSL  {H^1_{L,w}({\mathbb R}^n)}
\def \RN  {\mathbb{R}^n}
\def \HL {H^1_{L,w}({\mathbb R}^n)}
\def \HHL {\mathbb{H}^1_{L,w}({\mathbb R}^n)}
\def \LP {L^p_w}
\def\ls{\lesssim}
\def\varep{\varepsilon}
\def\brn{\mathbb{R}^n}

\title[$W^{1,p}$ estimates]
{Uniform $W^{1,p}$ estimates for systems of \\ linear elasticity  in a periodic medium}

\thanks{{\it 2000 Mathematics Subject Classification:} 35B27,35J55.}
\thanks{{\it Key words and phrase:} system of elasticity; homogenization; Lipschitz domain.}

\author[J. Geng, Z. Shen, and L. Song]
{Jun Geng \qquad Zhongwei Shen \qquad Liang Song}

\begin{abstract}
Let $\{\mathcal{L}_\varepsilon\}$ be a family of elliptic systems of linear elasticity
with rapidly oscillating periodic coefficients.
We obtain the uniform $W^{1,p}$ estimate $\|\nabla u_\varep \|_p \le C \| f\|_p$ 
in a Lipschitz domain $\Omega$
in $\mathbb{R}^n$
for solutions to the Dirichlet problem: $\mathcal{L}_\varep (u_\varep)=\text{div} (f)$
in $\Omega$ and $u_\varep=0$ on $\partial\Omega$,
where 
$|\frac{1}{p}-\frac12|<\frac{1}{2n}+\delta$ and $C$, $\delta>0$ are 
constants independent of $\varepsilon>0$.
The ranges are sharp for $n=2$ or $3$.
 \end{abstract}

 \medskip

\maketitle


\section {Introduction}
\setcounter{equation}{0}

The primary purpose of this paper is to study uniform $W^{1,p}$ estimates
for a family of elliptic systems of linear elasticity with rapidly oscillating coefficients.
Let $\Omega$ be a bounded Lipschitz domain in $\brn$, $n\geq 2$. 
Consider the Dirichlet problem
\begin{equation}\label{Dirichlet-problem}
\left\{
\aligned
\mathcal{L}_\varep (u_\varep) & =\text{div} (f) \quad \text{ in } \Omega,\\
 u_\varep & =0 \qquad\quad \text{ on } \partial\Omega,
\endaligned
\right.
\end{equation}
where
\begin{align}\label{e1.1}
\mathcal{L}_\varepsilon=-\frac{\partial}{\partial x_i}
\Big[a_{ij}^{\alpha\beta}\big(\frac{x}{\varepsilon}\big)
\frac{\partial}{\partial x_j}\Big]
=-{\rm div}\Big[A\big(\frac{x}{\varepsilon}\big)\nabla\Big], \qquad \varepsilon>0.
\end{align}
We will assume that the coefficient matrix  
$A(y)=(a_{ij}^{\alpha\beta}(y))$ is  real and satisfies
\begin{align}\label{e1.2}
a_{ij}^{\alpha\beta}(y)=a_{ji}^{\beta\alpha}(y)=
a_{\alpha j}^{i\beta}(y) \qquad  {\rm for} \  1\leq i,j,\alpha,\beta\leq n \ \  {\rm and} \ y\in \RD,
\end{align}
\begin{align}\label{e1.3}
\mu|\xi|^2\leq a_{ij}^{\alpha\beta}(y)\xi_i^{\alpha}\xi_j^\beta\leq \frac{1}{\mu}|\xi|^2 
            \qquad  {\rm for}  \ y\in \RD,
\end{align}
where $\mu$ is a positive constant and $\xi=(\xi_i^\alpha)$ is  any $n\times n$
{\it symmetric} 
matrix with real entries, and the periodicity condition
\begin{align}\label{e1.4}
A(y+z)=A(y)  \qquad {\rm for} \ y\in \RD \ {\rm and}\ z\in {\mathbb Z}^n.
\end{align}
We say $A\in \mathcal{M}(\mu,\lambda,\tau)$ if it satisfies
(\ref{e1.2}), (\ref{e1.3}), (\ref{e1.4}) and  the smoothness condition
\begin{align}\label{e1.5}
|A(x)-A(y)|\leq \tau|x-y|^{\lambda}  
\qquad {\rm for \  some} \ \lambda\in (0,1) \ {\rm and} \  \tau\geq 0.
\end{align}

The following is the main result of the paper.

\begin{theorem}\label{main-theorem-1}
Let $\Omega$ be a bounded Lipschitz domain in $\RD$.
Let $\mathcal{L}_\varep =-\text{\rm div} \big(A(x/\varep)\nabla\big)$
with $A\in \mathcal{M}(\mu,\lambda,\tau)$. 
Then for any $f\in L^p(\Omega)$ with $|\frac{1}{p}-\frac12|<\frac{1}{2n}+\delta$,
there exists a unique $u_\varep\in W^{1,p}_0(\Omega)$ such that
$\mathcal{L}_\varep (u_\varep)=\text{\rm div} (f)$ in $\Omega$.
Moreover, the solution $u_\varep$ satisfies
\begin{equation}\label{W-1-p-estimate}
\|\nabla u_\varep \|_{L^p(\Omega)} \le C_p\, \|f\|_{L^p(\Omega)},
\end{equation}
and constants $\delta>0$ and $C_p>0$ are independent of $\varep$.
\end{theorem}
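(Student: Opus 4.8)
The plan is to establish the $W^{1,p}$ estimate by a real-variable argument reducing the global bound to interior and boundary Lipschitz-scale estimates, with the homogenization theory supplying the crucial $\varepsilon$-independent decay of local $L^2$-averages of $\nabla u_\varepsilon$. The starting point is the energy estimate: since the bilinear form associated with $\mathcal{L}_\varepsilon$ is coercive on $H^1_0(\Omega)$ under \eqref{e1.2}--\eqref{e1.3} (Korn's inequality for the symmetric part, which holds on $W^{1,2}_0$), the Lax--Milgram theorem gives a unique $u_\varepsilon\in W^{1,2}_0(\Omega)$ with $\|\nabla u_\varepsilon\|_2\le C\|f\|_2$. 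Thus \eqref{W-1-p-estimate} is known for $p=2$, and the task is to extend the range to $|\tfrac1p-\tfrac12|<\tfrac1{2n}+\delta$ for some small $\delta>0$, and then to recover the full stated range by a duality argument (the adjoint system $\mathcal{L}_\varepsilon^*$ has coefficients $a_{ji}^{\beta\alpha}=a_{ij}^{\alpha\beta}$, so it is of the same class, and the estimate for $p$ and its dual exponent $p'$ are equivalent).

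The core is a Calder\'on--Zygmund-type real-variable lemma of the Shen type: to prove $\|\nabla u_\varepsilon\|_p\le C\|f\|_p$ for $2<p<2+\delta'$ it suffices to show that for each ball $B=B(x_0,r)$ with, say, $2B\subset\Omega$ (interior case) or $x_0\in\partial\Omega$ and $r$ small (boundary case), whenever $\mathcal{L}_\varepsilon(u_\varepsilon)=0$ in $2B\cap\Omega$ with $u_\varepsilon=0$ on $2B\cap\partial\Omega$, one has a reverse-H\"older / self-improving bound
\begin{equation}\label{plan-rh}
\Big(\dashint_{B}|\nabla u_\varepsilon|^q\Big)^{1/q}\le C\,\Big(\dashint_{2B}|\nabla u_\varepsilon|^2\Big)^{1/2}
\end{equation}
for some $q>p$ that is uniform in $\varepsilon$ and in $r$. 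I would first handle the two easy regimes: when $r\ge\varepsilon$ one compares $u_\varepsilon$ on $B$ with the solution $v$ of the \emph{homogenized} system $\mathcal{L}_0(v)=0$ with the same boundary data on $\partial(B\cap\Omega)$; since $\mathcal{L}_0$ has constant coefficients, $v$ enjoys the classical Lipschitz/$W^{1,q}$ interior and boundary (Lipschitz-domain) estimates, and the homogenization error $\|u_\varepsilon-v\|$ is $O((\varepsilon/r)^{\sigma})$ in the appropriate norm. When $r<\varepsilon$ one rescales $y=x/\varepsilon$ so that the operator becomes $-\mathrm{div}(A(y)\nabla)$ with $A\in\mathcal M(\mu,\lambda,\tau)$ frozen (no oscillation at this scale), and invokes the standard Schauder/$W^{1,q}$ theory for systems with H\"older coefficients on Lipschitz domains; the smoothness hypothesis \eqref{e1.5} is exactly what is needed here. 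The exponent $q$ in \eqref{plan-rh} is limited in the boundary case by the $W^{1,q}$ solvability of the constant-coefficient system $\mathcal{L}_0$ on Lipschitz domains, which is known to hold for $q<3+\delta$ when $n\ge3$ (and a larger range when $n=2$) — this is the origin of the range $|\tfrac1p-\tfrac12|<\tfrac1{2n}+\delta$ and of the sharpness claim for $n=2,3$.

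The main obstacle is the boundary estimate near $\partial\Omega$ at scales $r\ge\varepsilon$: one needs a Lipschitz-domain $W^{1,q}$ bound for $\mathcal{L}_0$ that is \emph{stable under the homogenization comparison}, i.e. one must set up an iteration where at each scale the solution is well-approximated by a homogenized solution whose $W^{1,q}$ norm is controlled, and the accumulated errors still sum. This requires a careful three-scale argument: (i) $r<\varepsilon$, frozen-coefficient Schauder; (ii) $\varepsilon\le r< r_0$, homogenization-plus-$\mathcal{L}_0$-regularity with geometric decay; (iii) $r\ge r_0$ (diameter scale), absorbed into the $L^2$ energy estimate. Combining (i)--(iii) yields \eqref{plan-rh} with the claimed uniformity; then the real-variable lemma upgrades it to the global $L^p$ bound for $p$ slightly past $\tfrac{2n}{n-1}$, and duality fills in $\tfrac{2n}{n+1}-\delta<p<2$. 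Uniqueness in $W^{1,p}_0(\Omega)$ for $p$ in this range follows because any such solution with zero data is, by the same estimate applied to $f=0$, forced to vanish (using that $W^{1,p}_0\subset W^{1,2}_0$ when $p\ge2$, and the dual statement when $p<2$).
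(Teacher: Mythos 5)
Your general framework is the right one and matches the paper's outer structure: reduce to a uniform weak reverse H\"older inequality at scale $q=p_n=\tfrac{2n}{n-1}$ via a real-variable (Shen/Caffarelli--Peral) lemma, handle scales $r<\varepsilon$ by rescaling to the frozen H\"older-coefficient operator $-\mathrm{div}(A(y)\nabla)$, handle scales $r\ge\varepsilon$ by comparison with the constant-coefficient homogenized operator, close with duality and the self-adjointness baked into (\ref{e1.2}). But the step where you say ``the homogenization error $\|u_\varepsilon-v\|$ is $O((\varepsilon/r)^\sigma)$ in the appropriate norm'' near the boundary of a Lipschitz domain is a genuine gap, not a routine fact. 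Quantitative boundary convergence rates of this type, for systems with merely Lipschitz boundary, require boundary correctors or layer-potential machinery that was not available in this generality; the whole point of the Avellaneda--Lin compactness method, which the paper adopts (Lemmas \ref{lemma-5.3}--\ref{lemma-5.5}), is to \emph{avoid} needing any quantitative rate. Instead one argues by contradiction: a normalized sequence of $\varepsilon_k$-solutions with $\varepsilon_k\to0$ is shown, after passing to a subsequence, to converge to a solution of the constant-coefficient system, which then violates the decay estimate of Lemma \ref{lemma-5.3}.

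There is a second, closely related subtlety your sketch misses. The compactness argument needs \emph{strong} convergence in order to pass to the limit in the normalization (\ref{5.4.1}) and the lower bound (\ref{5.4.2}); weak $W^{1,2}$ bounds from Caccioppoli only give strong $L^{p_n}$ convergence of the functions themselves via Rellich--Kondrachov, not of their gradients. For this reason the paper deliberately reformulates the boundary reverse H\"older inequality as a decay estimate for $\int |u_\varepsilon|^{p_n}$ over a thin boundary layer, and only converts back to $\int|\nabla u_\varepsilon|^{p_n}$ at the very end using the interior estimate (\ref{interior-estimate}) and Hardy's inequality (Lemma \ref{lemma-5.2} and the proof of Theorem \ref{theorem-5.1}). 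Your plan, which works throughout with $\nabla u_\varepsilon$, would not admit the compactness step; and your direct-comparison alternative would require the unproven rate. Aside from this, your account of the constant-coefficient Lipschitz-domain input (Lemma \ref{constant-coefficient-lemma}, via $L^2$ Rellich/square-function estimates and the half-derivative trace gain) and the role of duality and the first Korn inequality for coercivity is accurate, and your identification of the source of the exponent $p_n+\delta$ is essentially correct.
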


We will also consider a family of general second-order elliptic systems
$\{-\text{div}\big(A(x/\varep)\nabla\big)\}$,
where $A(y)=(a_{ij}^{\alpha\beta}(y))$
with $1\le i,j\le n$ and $1\le\alpha, \beta\le m$.
We say $A\in \Lambda (\mu, \lambda, \tau)$ if it
satisfies (\ref{e1.4})-(\ref{e1.5}) and
the ellipticity condition (\ref{e1.3}) for any
$\xi=(\xi_i^\alpha)\in \mathbb{R}^{nm}$.
The symmetry condition $A=A^*$, i.e., $a_{ij}^{\alpha\beta}
=a_{ji}^{\beta\alpha}$, is also needed in the following theorem.

\begin{theorem}\label{main-theorem-2}
Let $\Omega$ be a bounded Lipschitz domain in $\RD$.
Let $\mathcal{L}_\varep =-\text{\rm div} \big(A(x/\varep)\nabla\big)$
with $A\in {\Lambda}(\mu,\lambda,\tau)$ and $A=A^*$. 
Then for any $f\in L^p(\Omega)$ with $|\frac{1}{p}-\frac12|<\frac{1}{2n} +\delta$,
there exists a unique $u_\varep\in W^{1,p}_0(\Omega)$ such that
$\mathcal{L}_\varep (u_\varep)=\text{\rm div} (f)$ in $\Omega$.
Moreover, the solution $u_\varep$ satisfies (\ref{W-1-p-estimate})
and constants $\delta>0$ and $C_p>0$ are independent of $\varep$.
\end{theorem}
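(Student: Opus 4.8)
The strategy is the by-now-standard one for such uniform estimates: a real-variable (``good-$\lambda$'') argument reduces everything to two local inputs — the uniform interior Lipschitz estimate for $\mathcal{L}_\varepsilon$ supplied by the compactness method of Avellaneda and Lin, and the $W^{1,p}$ estimate in Lipschitz domains for the constant-coefficient homogenized operator $\mathcal{L}_0=-\mathrm{div}(\widehat{A}\,\nabla)$. It is the latter whose sharp range of validity is precisely $|\tfrac1p-\tfrac12|<\tfrac1{2n}+\delta$, and this is what forces the stated range and explains its sharpness for $n=2,3$. For the reductions: unique solvability in $W^{1,2}_0(\Omega)$ follows from the Lax--Milgram theorem, coercivity of the form coming from Korn's inequality in the elasticity setting \eqref{e1.2}--\eqref{e1.3} and directly from $A=A^*$ together with \eqref{e1.3} in the setting of Theorem~\ref{main-theorem-2}; hence by density of $C_0^\infty$ it suffices to prove the a priori bound \eqref{W-1-p-estimate} for smooth $f$. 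Because $\mathcal{L}_\varepsilon^*=\mathcal{L}_\varepsilon$, testing the equation for $u_\varepsilon$ against the $W^{1,p'}_0$-solution of $\mathcal{L}_\varepsilon v=\mathrm{div}\,\phi$ and conversely gives $\|\nabla u_\varepsilon\|_p\le C\|f\|_p$ for $1<p<2$ from the same bound at the conjugate exponent; since the range $|\tfrac1p-\tfrac12|<\tfrac1{2n}+\delta$ is symmetric under $p\leftrightarrow p'$, it is enough to treat $2\le p<\tfrac{2n}{n-1}+\delta$.

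By a real-variable argument of Calder\'on--Zygmund type (self-improvement of $L^2$ bounds), \eqref{W-1-p-estimate} follows once we show: for every $x_0\in\overline\Omega$ and $0<r<c\,\mathrm{diam}(\Omega)$, writing $D_t=\Omega\cap B(x_0,t)$, there is a decomposition $u_\varepsilon=v_B+w_B$ on $D_{4r}$ with $w_B\in W^{1,2}_0(D_{4r})$ such that $\big(\dashint_{D_{2r}}|\nabla w_B|^2\big)^{1/2}\ls\big(\dashint_{D_{4r}}|f|^2\big)^{1/2}$ and $\big(\dashint_{D_{r}}|\nabla v_B|^{p_1}\big)^{1/p_1}\ls\big(\dashint_{D_{2r}}|\nabla u_\varepsilon|^2\big)^{1/2}+\big(\dashint_{D_{4r}}|f|^2\big)^{1/2}$ for a fixed $p_1$ with $p<p_1<\tfrac{2n}{n-1}+\delta$, with constants independent of $\varepsilon$, $x_0$ and $r$. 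In each case I let $w_B$ be the $W^{1,2}_0(D_{4r})$-solution of $\mathcal{L}_\varepsilon w_B=\mathrm{div}(f)$, so that the first inequality is the energy estimate and $v_B=u_\varepsilon-w_B$ satisfies $\mathcal{L}_\varepsilon v_B=0$ in $D_{4r}$. When $B(x_0,4r)\subset\Omega$, $v_B$ is $\mathcal{L}_\varepsilon$-harmonic in $B(x_0,2r)$ and the uniform interior Lipschitz estimate gives $\|\nabla v_B\|_{L^\infty(B(x_0,r))}\ls\big(\dashint_{B(x_0,2r)}|\nabla v_B|^2\big)^{1/2}$, whence the second inequality for every $p_1<\infty$; thus the range restriction is a purely boundary phenomenon.

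For a boundary ball, $x_0\in\partial\Omega$, I split by scale. If $0<r\le\varepsilon$, the dilation $x\mapsto x/\varepsilon$ turns $v_B$ into a solution of a single second-order elliptic system with $\lambda$-H\"older coefficients in a boundary region of a Lipschitz domain with the same Lipschitz character; the $W^{1,p_1}$ boundary estimate for such operators in Lipschitz domains — obtained by perturbing the constant-coefficient estimate and valid exactly when $|\tfrac1{p_1}-\tfrac12|<\tfrac1{2n}+\delta$ — yields the required bound on $\nabla v_B$. If $r>\varepsilon$, let $v_0$ solve $\mathcal{L}_0 v_0=0$ in $D_{2r}$ with $v_0=v_B$ on $\partial D_{2r}$ (so $v_0=0$ on $\partial\Omega\cap B(x_0,2r)$). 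The $W^{1,p_1}$ estimate for the constant-coefficient system $\mathcal{L}_0$ in the Lipschitz domain bounds $\big(\dashint_{D_r}|\nabla v_0|^{p_1}\big)^{1/p_1}$ by $C\big(\dashint_{D_{2r}}|\nabla v_B|^2\big)^{1/2}$, while a homogenization error estimate near the boundary gives $\big(\dashint_{D_{2r}}|\nabla v_B-\nabla v_0|^2\big)^{1/2}\ls(\varepsilon/r)^{\sigma}\big(\dashint_{D_{4r}}|\nabla v_B|^2\big)^{1/2}$ for some $\sigma>0$; replacing $v_B$ by $v_0$ and moving $v_B-v_0$ into the remainder yields the two inequalities, the gain $(\varepsilon/r)^{\sigma}$ being precisely the slack the real-variable argument can absorb.

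I expect two main obstacles. First, one must supply the $W^{1,p}$ estimate in Lipschitz domains, in the range $|\tfrac1p-\tfrac12|<\tfrac1{2n}+\delta$, for constant-coefficient — hence, by perturbation, $\lambda$-H\"older-coefficient — systems satisfying \eqref{e1.2}--\eqref{e1.3}; the key structural fact is that the homogenized elasticity tensor $\widehat{A}$ is again symmetric and elliptic on symmetric matrices, so $\mathcal{L}_0$ falls within the harmonic-analysis theory of boundary value problems on Lipschitz domains (Rellich identities, layer potentials), and the exponent $\tfrac1{2n}$, sharp for $n=2,3$, is exactly the one coming from the reverse-H\"older inequality for the square function (or the nontangential maximal function) of solutions. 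Second, one must control the homogenization error $v_B-v_0$ up to the Lipschitz boundary: the two-scale ansatz $v_B\approx v_0+\varepsilon\chi(\cdot/\varepsilon)\nabla v_0$ generates boundary layers that admit no sharp rate on a merely Lipschitz boundary, so one truncates the corrector by a cutoff supported away from $\partial\Omega$ and applies a Meyers-type self-improvement of the resulting $L^2$ estimate to extract some positive power $(\varepsilon/r)^\sigma$, which is all that is needed. The remaining ingredients — the energy estimates, the interior Lipschitz bound, and the bookkeeping inside the real-variable lemma — are routine.
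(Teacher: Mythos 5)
The overall scaffolding you set up (reduction to $p>2$ by duality using $A=A^*$, a real-variable/Caffarelli--Peral perturbation argument, the decomposition $u_\varepsilon=v_B+w_B$ with $w_B$ the energy solution of $\mathcal{L}_\varepsilon w_B=\mathrm{div}(f)$ on $D_{4r}$, and the Avellaneda--Lin interior Lipschitz estimate to dispose of interior balls) is correct and is in the same spirit as the paper's use of Theorem~\ref{main-theorem-3}, Remark~\ref{duality-remark} and estimate~(\ref{interior-estimate}).

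The gap is in the boundary step for $r>\varepsilon$. You assert a homogenization error estimate of the form
\begin{equation*}
\Big(\dashint_{D_{2r}}|\nabla v_B-\nabla v_0|^2\Big)^{1/2}\lesssim\Big(\frac{\varepsilon}{r}\Big)^{\sigma}\Big(\dashint_{D_{4r}}|\nabla v_B|^2\Big)^{1/2},
\end{equation*}
with $v_0$ the $\mathcal{L}_0$-solution with boundary data $v_B$, and you use $\nabla v_0$ as the good function $h$ in the Caffarelli--Peral lemma. This cannot hold: $\nabla v_B$ converges to $\nabla v_0$ only \emph{weakly} in $L^2$, not strongly. The two-scale expansion reads $v_B\approx v_0+\varepsilon\,\chi(x/\varepsilon)\nabla v_0$, whose gradient contains the $O(1)$ oscillatory term $(\nabla\chi)(x/\varepsilon)\,\nabla v_0$; thus $\|\nabla v_B-\nabla v_0\|_{L^2(D_{2r})}$ stays bounded below (of order $\|\nabla v_0\|_{L^2}$) as $\varepsilon\to0$, and no truncation of the corrector or Meyers self-improvement can supply a positive power $(\varepsilon/r)^\sigma$. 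To make your comparison work, the function $h$ would have to include the corrector, i.e.\ be $\nabla\big(v_0+\varepsilon\eta\,\chi(\cdot/\varepsilon)\nabla v_0\big)$, and then controlling $\|h\|_{L^{p_1}}$ requires $L^{p_1}$ bounds on $\nabla v_0$ times $\nabla\chi(\cdot/\varepsilon)$ on a Lipschitz boundary region, which is essentially the problem you are trying to solve. This is precisely why the paper does not compare gradients: in Section~4 it uses instead the uniform nontangential maximal function estimate for the $L^{2+\delta}$ regularity problem from \cite{Kenig-Shen-2}, which bounds $(\nabla u_\varepsilon)^*$ by $\nabla_{\tan}u_\varepsilon$ (a Rellich-type identity, valid under $A\in\Lambda(\mu,\lambda,\tau)$ and $A=A^*$) and combines it with a pointwise interior bound $|\nabla u_\varepsilon(x)|\lesssim d(x)^{-n/q_1}$ to run the iteration in Lemma~\ref{iteration-lemma}. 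The companion strategy in Section~5 (compactness, used for elasticity) also sidesteps gradient comparison: it compares only $\dashint|u|^{p_n}$ over thin boundary layers, where the convergence $u_\varepsilon\to u_0$ is strong. Your $r\le\varepsilon$ rescaling step and the identification of the constant-coefficient range $p_n+\delta$ are fine; it is the large-scale boundary comparison that needs to be replaced by one of these two mechanisms.
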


Elliptic equations and systems with rapidly oscillating coefficients 
arise in the theory of homogenization (see e.g. \cite{bensoussan-1978, oleinik-1992}).
It is well known that as $\varep\to 0$,
the solution $u_\varep$ of (\ref{Dirichlet-problem})
converges to $u_0$ weakly in $W^{1,2}(\Omega)$ and strongly in $L^2(\Omega)$,
where $u_0\in W^{1,2}_0(\Omega)$ is the solution of the homogenized elliptic
system.
Uniform regularity estimates of $u_\varep$ are an important tool in the study of 
various convergence problems for $\mathcal{L}_\varep$.
We remark that if $\Omega$ is $C^{1,\alpha}$ and $A\in \Lambda(\mu, \lambda, \tau)$,
the uniform $W^{1,p}$ estimate (\ref{W-1-p-estimate})
was established in \cite{AL-1987, AL-1991}  for any $1<p<\infty$, without the symmetry condition
$A=A^*$ (see \cite{Kenig-Lin-Shen-1} for its extension to the Neumann boundary condition
with the symmetry condition).
It was pointed out in \cite{AL-1991} that the same approach also
gives estimate (\ref{W-1-p-estimate}) for $1<p<\infty$
if $\Omega$ is $C^{1,\alpha}$ 
and $A\in \mathcal{M}(\mu, \lambda,\tau)$.
We also mention that if $\Omega$ is Lipschitz and $m=1$,
the $W^{1,p}$ estimate (\ref{W-1-p-estimate}) was obtained in \cite{Shen-2008}
for $(4/3)-\delta<p<4+\delta$ and $n=2$, and for $(3/2)-\delta<p<3+\delta$ and $n\ge 3$.
The ranges of $p$'s in \cite{Shen-2008}
are known to be sharp (even for the Laplacian \cite{Jerison-Kenig-1995}).
It follows that the ranges of $p$'s in Theorems \ref{main-theorem-1}
and \ref{main-theorem-2} are sharp for $n=2$ or $3$.
The question of sharp ranges of $p$'s for which the $W^{1,p}$ estimate
holds in Lipschitz domains remains open in the case $n\ge 4$ (even for elliptic 
systems with constant coefficients).
We remark that in the non-periodic setting
the $W^{1,p}$ estimates 
for second-order elliptic equations and systems
have been studied
extensively in recent years. We refer the reader to \cite{Auscher,Byun-Wang-2004,Byun-2005,
Shen-2005,Krylov-2007,Byun-Wang-2008,Dong-2010} and their references
for various results on elliptic operators with nonsmooth coefficients in nonsmooth domains.

For a ball $B=B(x,r)$, we will use $tB$ to denote $B(x, tr)$.
Recall that $\Omega$ is a Lipschitz domain if there exists $r_0>0$ such that
for any $Q\in \partial\Omega$, there exists a Lipschitz function
$\psi: \mathbb{R}^{n-1}\to \mathbb{R}$ such that $\Omega\cap B(Q, 8r_0)$
is given by $\{ (x^\prime, x_n)\in \brn: \, x_n>\psi(x^\prime)\}\cap B(Q,8r_0)$,
after some possible translation and rotation of the coordinate system.
The proofs of Theorems \ref{main-theorem-1} and \ref{main-theorem-2}
rely on the following.

\begin{theorem}\label{main-theorem-3}
Let $\Omega$ be a bounded Lipschitz domain in $\brn$ and $q>2$.
Let $\mathcal{L}=-\text{\rm div} (A\nabla)$ be a second-order elliptic system
with $A=\big(a_{ij}^{\alpha\beta} (x)\big)$ and $1\le i, j\le n$,
$1\le \alpha, \beta\le m$. Suppose that
(1) $\|A\|_{L^\infty(\brn)}\le \mu^{-1}$; (2) for any $\phi \in W_0^{1,2}(\brn)$ and
some $\mu>0$,
\begin{equation}\label{quadratic-estimate}
\mu \int_{\brn} |\nabla \phi |^2\, dx
\le \int_{\brn} a_{ij}^{\alpha\beta} (x) \frac{\partial \phi^\alpha}{\partial x_i}
\cdot \frac{\partial \phi^\beta}{\partial x_j}\, dx;
\end{equation}
(3) for any $w\in W^{1,2}(3B\cap\Omega)$ with the property that
$\mathcal{L}(w)=0$ in $3B\cap\Omega$ and $w=0$ on $3B\cap\partial\Omega$ (if $3B\cap\partial\Omega
\neq \emptyset$), where either $3B\subset \Omega$ or
$B=B(y,r)$ with $y\in \partial\Omega$ and $0<r<r_0$, one has $|\nabla w|\in
L^q (B\cap \Omega)$ and 
\begin{equation}\label{reverse-Holder-inequality}
\left\{\frac{1}{|B\cap\Omega|}
\int_{B\cap\Omega} |\nabla w|^q\, dx \right\}^{1/q}
\le N
\left\{\frac{1}{|2B\cap\Omega|}
\int_{2B\cap\Omega} |\nabla w|^2\, dx \right\}^{1/2}.
\end{equation}
Then there exists  $\delta>0$, depending  only on $n$, $m$, $\mu$, $q$,
 $N$ and the Lipschitz character of $\Omega$,
such that for any $f\in L^p(\Omega)$ with $2<p<q+\delta$, the unique 
solution to $\mathcal{L}(u)=\text{\rm div} (f)$ in $W^{1,2}_0(\Omega)$ 
 satisfies $\|\nabla u\|_{L^p(\Omega)}
\le C_p \, \| f\|_{L^p(\Omega)}$, where
$C_p$ depends only on $n$, $m$, $\mu$, $p$, $q$, $N$ and the Lipschitz character of $\Omega$.
\end{theorem}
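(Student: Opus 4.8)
The plan is to deduce this $W^{1,p}$ estimate for $2<p<q+\delta$ from the $L^2$ theory together with the interior and boundary reverse Hölder inequalities in hypothesis (3), via a real-variable extrapolation (good-$\lambda$ / weak reverse Hölder) argument of the type introduced by Caffarelli--Peral and developed by Shen. The starting point is that under hypotheses (1)--(2) the bilinear form is bounded and coercive on $W^{1,2}_0(\Omega)$, so by Lax--Milgram there is a unique $u\in W^{1,2}_0(\Omega)$ solving $\mathcal{L}(u)=\mathrm{div}(f)$ whenever $f\in L^2(\Omega)$, and $\|\nabla u\|_{L^2(\Omega)}\le C\|f\|_{L^2(\Omega)}$. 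The task is to upgrade the exponent $2$ to $p$, using only local information about solutions.

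First I would set up the real-variable machinery. Extend $f$ by zero outside $\Omega$ and regard $F=|\nabla u|$ and $g=|f|$ as functions on $\mathbb{R}^n$ (with $F$, $g$ supported in $\overline\Omega$). The key abstract statement is: if for some $q>2$ one can show that for every ball $B$ with $|2B|\le c_0|\Omega|$ — say $B=B(x_0,r)$ with either $4B\subset\Omega$ or $x_0\in\partial\Omega$ — there is a decomposition $F = F_B + R_B$ on $2B\cap\Omega$ with
\begin{equation*}
\left(\frac{1}{|2B|}\int_{2B\cap\Omega} |F_B|^2\right)^{1/2}
\le C\left(\frac{1}{|4B|}\int_{4B\cap\Omega} |g|^2\right)^{1/2}
+ C\left(\frac{1}{|4B|}\int_{4B\cap\Omega}|F|^2\right)^{1/2}
\end{equation*}
and
\begin{equation*}
\left(\frac{1}{|2B|}\int_{2B\cap\Omega}|R_B|^q\right)^{1/q}
\le C\left(\frac{1}{|4B|}\int_{4B\cap\Omega}|F|^2\right)^{1/2},
\end{equation*}
then $\|F\|_{L^p(\Omega)}\le C\|g\|_{L^p(\Omega)}$ for $2<p<q$, with a small self-improvement giving the extra $\delta$; this is exactly the form of the extrapolation theorem available in the literature (e.g. Shen-2008 and its antecedents), and I would invoke it as a black box, stating it precisely as a lemma. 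The role of the $L^2$ bound above is to absorb the term $\|F\|_{L^2}\le C\|g\|_{L^2}\le C\|g\|_{L^p}$ at the end.

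The substance is therefore to produce the decomposition $F=F_B+R_B$. Fix such a ball $B$. Use the $L^2$ solvability to split $u = v + w$ where $v$ solves $\mathcal{L}(v)=\mathrm{div}(f\chi_{4B\cap\Omega})$ in $W^{1,2}_0(\Omega)$ — this needs $f\chi_{4B}$ to be seen as an element against which $\mathcal L$ is invertible, which is immediate — and $w = u - v$ so that $\mathcal{L}(w)=\mathrm{div}(f\chi_{\Omega\setminus 4B})=0$ in $3B\cap\Omega$, while $w=0$ on $3B\cap\partial\Omega$ whenever that set is nonempty (since $u,v\in W^{1,2}_0(\Omega)$). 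Set $F_B=|\nabla v|$ and $R_B=|\nabla w|$. The estimate for $F_B$ is just the global $L^2$ bound localized: $\|\nabla v\|_{L^2(\Omega)}\le C\|f\chi_{4B\cap\Omega}\|_{L^2}$, which gives the displayed inequality for $F_B$ (even without the $F$-term). The estimate for $R_B$ is precisely where hypothesis (3) enters: $w$ is a solution of $\mathcal{L}(w)=0$ in $3B\cap\Omega$ vanishing on the boundary portion, so the reverse Hölder inequality (\ref{reverse-Holder-inequality}) gives
\begin{equation*}
\left(\frac{1}{|B\cap\Omega|}\int_{B\cap\Omega}|\nabla w|^q\right)^{1/q}
\le N\left(\frac{1}{|2B\cap\Omega|}\int_{2B\cap\Omega}|\nabla w|^2\right)^{1/2}
\le N\left(\frac{1}{|2B\cap\Omega|}\int_{2B\cap\Omega}(|\nabla u|^2+|\nabla v|^2)\right)^{1/2},
\end{equation*}
and bounding $\|\nabla v\|_{L^2(2B\cap\Omega)}$ again by $\|f\chi_{4B\cap\Omega}\|_{L^2}$ and then crudely by $\|\nabla u\|_{L^2(4B\cap\Omega)}$ via the $L^2$ estimate — or better, just absorbing it into the allowed right-hand side — yields the $R_B$-bound after replacing $B$ by $B$ and $2B$, $4B$ by slightly larger balls (a harmless adjustment of constants, or one runs the argument with $B, 2B, 3B$ throughout). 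One also has to treat the two geometric cases — interior balls $4B\subset\Omega$ and boundary balls centered at $\partial\Omega$ with $r<r_0$ — uniformly; the hypothesis is stated to cover exactly these, and a covering/Besicovitch argument inside the extrapolation lemma reduces to them. Finally, to get $2<p<q+\delta$ rather than merely $p<q$, I would note that the reverse Hölder exponent $q$ self-improves: (\ref{reverse-Holder-inequality}) with exponent $q$ formally propagates (by Gehring's lemma applied to the solution $w$, or by re-running the extrapolation) to a slightly larger exponent $q'=q+\delta_0$, and then the extrapolation with $q'$ gives the full range.

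The main obstacle I expect is purely bookkeeping rather than conceptual: making the ball-radii match up across the chain "global $L^2$ solvability $\to$ local $L^2$ bound on $v$ $\to$ reverse Hölder on $w$" so that the two inequalities of the decomposition hold with the \emph{same} inner and outer balls demanded by the extrapolation lemma, and verifying that the constant $c_0$ restricting $|2B|\le c_0|\Omega|$, together with the constraint $r<r_0$ for boundary balls, is compatible with the covering step. A secondary technical point is the self-improvement of $q$ to $q+\delta$; if one is content with $2<p<q$ the proof is shorter, but the stated $\delta$ follows from the standard observation that the class of exponents for which such a weak reverse Hölder estimate holds is open. None of these steps requires new ideas beyond the cited real-variable theorem; the content of Theorem~\ref{main-theorem-3} is precisely the packaging of hypotheses (1)--(3) into the hypotheses of that theorem.
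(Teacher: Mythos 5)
Your argument is correct and rests on the same Caffarelli--Peral/Shen real-variable extrapolation idea as the paper, but the packaging differs in a way worth noting. You invoke the \emph{function-decomposition} form of the lemma: for each ball $B$ you split $u=v+w$ with $\mathcal{L}(v)=\mathrm{div}(f\chi_{4B\cap\Omega})$, bound $|\nabla v|$ in $L^2$ by the data, and bound $|\nabla w|$ in $L^q$ by the reverse H\"older hypothesis. The paper instead applies the \emph{operator} form (Theorem 3.3 of Shen 2005, reproduced as Theorem~\ref{theorem-shen}): one takes $T(f)=\nabla u$ and only needs to show that $Tf$ satisfies a weak reverse H\"older estimate on $B\cap\Omega$ whenever $\mathrm{supp}(f)\subset\Omega\setminus 3B$. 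This hypothesis is met with no decomposition at all, since $\mathrm{supp}(f)\subset\Omega\setminus 3B$ forces $\mathcal{L}(u)=0$ in $3B\cap\Omega$ and $u=0$ on $3B\cap\partial\Omega$, so hypothesis (3) applies directly to $u$ itself. Both routes are legitimate (and the paper does use the decomposition form of the lemma, Theorem~\ref{CP-theorem}, elsewhere for the perturbation argument), but the operator form is more economical for this particular statement: it eliminates the bookkeeping you flag as the main obstacle, namely matching the radii of the balls for $v$, $w$ and $u$, because the lemma is already stated with $B$, $2B$, $3B$ and a free supremum over larger balls. The final self-improvement step, upgrading $q$ to $q+\delta$ via the openness of the weak reverse H\"older class, is identical in both.
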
 

Theorem \ref{main-theorem-3}, which is proved in Section 2,
follows by a real variable argument originated 
in \cite{Caffarelli-Peral} and further developed in \cite{Shen-2005}.
As an application of Theorem \ref{main-theorem-3}, in Section 3, we 
establish the $W^{1,p}$ estimate in the non-periodic setting for 
elliptic systems with VMO coefficients in Lipschitz domains.
Observe that by Lax-Milgram Theorem,
the conditions (1) and (2) in Theorem \ref{main-theorem-3}
give the existence and uniqueness of $W^{1,2}$ solutions for
any $f\in L^2(\Omega)$.
Clearly, the ellipticity condition in Theorem \ref{main-theorem-2}
implies the coercive estimate (\ref{quadratic-estimate}).
By the first Korn inequality this is also the case for Theorem \ref{main-theorem-1}.
Consequently, to prove Theorems \ref{main-theorem-1} and \ref{main-theorem-2},
as in \cite{Shen-2008},
it suffices to establish the weak reverse H\"older inequality (\ref{reverse-Holder-inequality})
with $q=p_n=\frac{2n}{n-1}$
for local $W^{1,2}$ solutions. We further note that under the assumption
$A\in \Lambda(\mu, \lambda, \tau)$ or $A\in \mathcal{M}(\mu, \lambda, \tau)$, 
it follows from \cite{AL-1987} that
\begin{equation}\label{interior-estimate}
\|\nabla u_\varep\|_{L^\infty(B)}
\le C \left\{ \frac{1}{|2B|}\int_{2B} |\nabla u_\varep|^2\, dx \right\}^{1/2},
\end{equation}
if $\mathcal{L}_\varep (u_\varep)=0$ in $3B$.
As a result we only need to establish (\ref{reverse-Holder-inequality})
for $w\in W^{1,2}(3B\cap\Omega)$ satisfying
$\mathcal{L}_\varep (w)=0$ in $3B\cap \Omega$ and $w=0$ on $3B\cap\partial\Omega$,
where $B=B(Q,r)$ for some $Q\in \partial\Omega$ and $0<r<cr_0$,
with constants $c$ and $N$ {\it independent} of the parameter $\varep>0$.
We will present two different approaches to this boundary reverse H\"older 
estimate.

The proof of Theorem \ref{main-theorem-2}, given in Section 4, uses the recently established
non-tangential maximal function estimates for the $L^p$ Dirichlet and 
regularity problems in \cite{Kenig-Shen-2} for 
some $p=q_0>2$, under the conditions $A\in \Lambda(\mu, \lambda, \tau)$
and $A=A^*$. Let $\rho(x)=\text{dist}(x, \partial\Omega)$.
To see (\ref{reverse-Holder-inequality}), the basic idea is to write
$$
\int_{B\cap\Omega} |\nabla w|^q\, dx
=\int_{B\cap\Omega} |\nabla w|^{q_0}\cdot |\nabla w|^{q-q_0}\, dx.
$$
and estimate $|\nabla w|^{q_0}$ by its (local) non-tangential maximal function and
$|\nabla w|^{q-q_0}$ by 
\begin{equation}\label{interior-estimate-1}
|\nabla w(x)|
\le {C}{\big[\rho (x)\big]^{-n/2}}
\left\{ \int_{2B\cap \Omega} |\nabla w|^2\, dy \right\}^{1/2}
\end{equation}
for any $x\in B\cap \Omega$, which follows from the interior estimate (\ref{interior-estimate}).
This gives (\ref{reverse-Holder-inequality}) for any $q<q_0+\frac{2}{n}$,
which can be used to improve the exponent of $\rho (x)$ in (\ref{interior-estimate-1}).
The desired estimate (\ref{reverse-Holder-inequality})
with $q=\frac{2n}{n-1}$ follows by an iteration argument.

In the case of elliptic systems of linear elasticity,
the non-tangential maximal function estimates used in the proof of Theorem \ref{main-theorem-2}
are not known.
To prove Theorem \ref{main-theorem-1}, we will instead adapt the approach used in 
\cite{Shen-2008} for single equations ($m=1$).
The idea is to reduce the estimate (\ref{reverse-Holder-inequality})
to a decay estimate of an integral of $|w|^q$ (not $|\nabla w|^q$) on  
a boundary layer and apply a compactness argument.
See Section 5 for details.

The summation convention is used throughout this paper.
Unless indicated otherwise $\Omega$ will always be a bounded Lipschitz 
domain in $\brn$. 
Finally, we will make no effort to distinguish vector-valued functions or function spaces
from their real-valued counterparts. This should be clear from the context.

\section{Proof of Theorem \ref{main-theorem-3}}
\setcounter{equation}{0}

By Lax-Milgram Theorem,
under the conditions (1) and (2) in Theorem \ref{main-theorem-3}, 
given any $f\in L^2(\Omega)$, the system $\mathcal{L}(u)=\text{div}(f)$
has a unique solution in $W^{1,2}_0(\Omega)$. Moreover, the solution satisfies
the estimate $\|\nabla u \|_{L^2(\Omega)} \le C \, \| f\|_{L^2(\Omega)}$,
where $C$ depends only on $\mu$.
Consider now the linear operator $T(f)=\nabla u$.
Clearly, $T$ is bounded on $L^2(\Omega)$.
To show that $T$ is bounded on $L^p(\Omega)$ for $2<p<q+\delta$,
we use the following theorem in \cite[Theorem 3.3]{Shen-2005}.

\begin{theorem}\label{theorem-shen}
Let $T$ be a bounded sublinear operator on $L^2(\Omega)$, where $\Omega$ is 
a bounded Lipschitz domain in $\brn$. Let $q>2$. Suppose that
there exists a constant $N>1$ such that
for any bounded measurable function $f$ with \text{\rm supp}$(f)\subset \Omega\setminus 3B$,
\begin{equation}\label{2.1.1}
\aligned
&\left\{\frac{1}{r^n}\int_{\Omega\cap B} |Tf|^q\, dx \right\}^{1/p}\\
& \le N \left\{\left(\frac{1}{r^n}
\int_{\Omega\cap 2 B} |Tf|^2\, dx\right)^{1/2}
+\sup_{B^\prime\supset B}
\left(\frac{1}{|B^\prime|}
\int_{B^\prime} |f|^q\, dx \right)^{1/q}\right\},
\endaligned
\end{equation}
where $B=B(x_0,r)$ is a ball with the property that
$0<r<c_0 r_0$ and either $x_0\in \partial\Omega$ or $B(x_0,3 r)\subset\Omega$.
Then $T$ is bounded on $L^p(\Omega)$ for any $2<p<q$.
\end{theorem}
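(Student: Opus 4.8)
The plan is to run the real-variable argument of Caffarelli--Peral, as developed in \cite{Shen-2005}: reduce the $L^p$ bound for $T$ to a ``good-$\lambda$'' inequality for a maximal function of $|Tf|^2$, obtained from a Calder\'on--Zygmund decomposition adapted to $\Omega$, in which the $L^2$-boundedness of $T$ handles the part of $f$ near each ball and the reverse-H\"older hypothesis (\ref{2.1.1}) handles the part away from it. Fix $p\in(2,q)$. First I would reduce to an a priori estimate: since $\Omega$ is bounded, $L^\infty(\Omega)$ is dense in $L^p(\Omega)$, so it suffices to prove $\|Tf\|_{L^p(\Omega)}\le C\|f\|_{L^p(\Omega)}$ for bounded $f$ and then, using that $T$ is sublinear and bounded on $L^2$, pass to general $f\in L^p(\Omega)$ by approximation (if bounded $f_j\to f$ in $L^p$, hence in $L^2$, then $Tf_j\to Tf$ in $L^2$ while $\{|Tf_j|\}$ is Cauchy in $L^p$). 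Then I would introduce the truncated Hardy--Littlewood maximal operator $\mathcal{M}$ relative to $\Omega$, $\mathcal{M}(h)(x)=\sup_{0<\rho<c_0 r_0}|B(x,\rho)|^{-1}\int_{B(x,\rho)\cap\Omega}|h|$, which is of weak type $(1,1)$ and bounded on $L^s(\Omega)$ for $s>1$; since $|Tf|^p\le\bigl(\mathcal{M}(|Tf|^2)\bigr)^{p/2}$ a.e., it is enough to bound $\|\mathcal{M}(|Tf|^2)\|_{L^{p/2}(\Omega)}$, which I would estimate through its distribution function $\lambda\mapsto|E_\lambda|$, $E_\lambda:=\{x\in\Omega:\mathcal{M}(|Tf|^2)(x)>\lambda\}$. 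Integrating only up to a finite level and passing to the limit keeps every integral finite, so there is no circularity.

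The core step is the good-$\lambda$ inequality. Put $\lambda_0:=C_0|\Omega|^{-1}\int_\Omega|Tf|^2$. For $\lambda>\lambda_0$ and a large parameter $N_0$ to be fixed, decompose the open set $E_\lambda$ into a disjoint family of Whitney-type balls $\{B_k=B(x_k,r_k)\}$, each interior with $3B_k\subset\Omega$ or centered on $\partial\Omega$ with $r_k<cr_0$ (here the Lipschitz lower bound $|B(Q,\rho)\cap\Omega|\gtrsim\rho^n$ is used to re-center the boundary balls so that (\ref{2.1.1}) applies to them), and such that a fixed dilate of $B_k$ meets $\Omega\setminus E_\lambda$, whence $r_k^{-n}\int_{2B_k\cap\Omega}|Tf|^2\lesssim\lambda$. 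On each $B_k$ split $f=f\chi_{3B_k\cap\Omega}+f\chi_{\Omega\setminus 3B_k}=:f_k'+f_k''$, so $Tf=Tf_k'+Tf_k''$. One checks that $x\in E_{N_0\lambda}\cap B_k$ forces $\mathcal{M}$, evaluated over small balls lying in $2B_k$, to exceed $N_0\lambda$, so $|E_{N_0\lambda}\cap B_k|$ is bounded by the measures of the two local sublevel sets for $|Tf_k'|^2$ and $|Tf_k''|^2$ (maximal operator restricted to $2B_k$). The first is controlled by the weak-$(2,2)$ bound for $T$, hence by $(N_0\lambda)^{-1}\int_{3B_k\cap\Omega}|f|^2$; the second by the $L^{q/2}$-boundedness of the local maximal operator together with hypothesis (\ref{2.1.1}) applied with $B=B_k$ (legitimate since $\operatorname{supp}f_k''\subset\Omega\setminus 3B_k$), which, because $q>2$, yields a term of size $(N_0^{-q/2}+\cdots)|B_k|$. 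Declaring $B_k$ ``bad'' when a suitable average of $|f|$ over a dilate of $B_k$ fails to be small, summing over $k$, and choosing $N_0$ large and the smallness threshold small, I would obtain
\[
|E_{N_0\lambda}|\ \le\ \eta\,|E_\lambda|\ +\ C_\eta\,\bigl|\{x\in\Omega:\mathcal{M}(|f|^2)(x)>c\lambda\}\bigr|,\qquad \lambda>\lambda_0.
\]

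To finish, multiply by $\lambda^{p/2-1}$ and integrate over $(\lambda_0,K)$ for finite $K$: the scaling $\mu=N_0\lambda$ on the left produces a factor $N_0^{-p/2}$, which dominates $\eta$ once $N_0$ is large and $\eta$ small, so the first term is absorbed; the part over $(\lambda_0,N_0\lambda_0]$ contributes $\le C|\Omega|\lambda_0^{p/2}\le C\|Tf\|_{L^2(\Omega)}^p\le C\|f\|_{L^p(\Omega)}^p$; and the $f$-term is $\le C\|\mathcal{M}(|f|^2)\|_{L^{p/2}(\Omega)}^{p/2}\le C\|f\|_{L^p(\Omega)}^p$ since $p/2>1$. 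Letting $K\to\infty$ gives $\|\mathcal{M}(|Tf|^2)\|_{L^{p/2}(\Omega)}\le C\|f\|_{L^p(\Omega)}^2$, hence $\|Tf\|_{L^p(\Omega)}\le C\|f\|_{L^p(\Omega)}$; the density step then extends this to all $f\in L^p(\Omega)$.

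The delicate point, and the main obstacle, is arranging that the error term in the good-$\lambda$ inequality involves a maximal function of $|f|$ raised to a power \emph{strictly below $p$} (the power $2$ above, as for a single equation with smooth comparison solutions in Caffarelli--Peral). Hypothesis (\ref{2.1.1}) bounds an $L^q$-average of $Tf_k''$ by the $\sup_{B'\supset B_k}$-average of $|f|^q$; carried naively through Chebyshev's inequality and the summation, this leaves an error in $\mathcal{M}(|f|^q)$, whose superlevel sets do not integrate against $\lambda^{p/2-1}\,d\lambda$ to $\|f\|_{L^p}^p$ (the maximal operator is unbounded on $L^{p/q}$ with $p/q<1$), producing only the too-weak bound $\|Tf\|_p\lesssim\|f\|_q$. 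One must instead exploit that $f_k''$ vanishes on $3B_k$ — so only balls $B'$ of radius $\gtrsim r_k$ enter the supremum — together with the localization to scales $\lesssim r_0$, in order to absorb the $|f|^q$-contribution into the $\mathcal{M}(|f|^2)$-error via the stopping-time selection. Getting this bookkeeping right, along with the geometric reduction that guarantees every selected ball is of the type admitted in (\ref{2.1.1}), is the technical heart of the argument.
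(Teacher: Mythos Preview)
The paper does not prove this theorem; it is quoted from \cite[Theorem~3.3]{Shen-2005} and used as a black box in the proof of Theorem~\ref{main-theorem-3}. Your proposal is therefore not competing with any argument in this paper but is a reconstruction of the proof in the cited source, and your outline --- Whitney/Calder\'on--Zygmund decomposition of $\{\mathcal{M}(|Tf|^2)>\lambda\}$ adapted to $\Omega$, local splitting $f=f_k'+f_k''$, a good-$\lambda$ inequality for $\mathcal{M}(|Tf|^2)$, absorption, and integration in $\lambda$ --- is exactly the method of \cite{Caffarelli-Peral,Shen-2005}.

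On the obstacle you single out at the end: your analysis is correct that if the supremum term in (\ref{2.1.1}) really carried the exponent $q$, the good-$\lambda$ error would involve $\mathcal{M}(|f|^q)$, which does not integrate against $\lambda^{p/2-1}\,d\lambda$ to $\|f\|_p^p$ when $p<q$. But (\ref{2.1.1}) already contains a visible misprint --- the outer exponent on the left is written $1/p$ where it must be $1/q$ --- and in \cite{Shen-2005} the supremum term is $\bigl(|B'|^{-1}\int_{B'}|f|^2\bigr)^{1/2}$, with exponent $2$. With exponent $2$ the error term is precisely the $|\{\mathcal{M}(|f|^2)>c\lambda\}|$ you wrote in your displayed good-$\lambda$ inequality, and since $p/2>1$ the maximal theorem closes the estimate; no additional bookkeeping exploiting $\operatorname{supp}f_k''\subset\Omega\setminus 3B_k$ is required. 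For the application in this paper the issue is moot in any case: as the authors note right after stating Theorem~\ref{theorem-shen}, the reverse H\"older hypothesis (\ref{reverse-Holder-inequality}) delivers (\ref{2.1.1}) \emph{without} the supremum term.
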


It also follows from the proof of Theorem \ref{theorem-shen} in \cite{Shen-2005}
that if $\|T\|_{L^2(\Omega)\to L^2(\Omega)}\le C_0$, then 
$\| T\|_{L^p(\Omega)\to L^p(\Omega)}$ is bounded by a constant depending only 
on $p$, $q$, $N$, $C_0$, $c_0$ and the Lipschitz character of $\Omega$.
Therefore, to prove Theorem \ref{main-theorem-3} for $2<p<q$,
 it suffices to verify the condition (\ref{2.1.1}) with $T(f)=\nabla u$.
However, if supp$(f)\subset\Omega\setminus 3B$, one has
$\mathcal{L}(u)=0$ in $3B\cap \Omega$.
Thus the weak reverse H\"older inequality (\ref{reverse-Holder-inequality})
with exponent $q$ implies (\ref{2.1.1}) with the same exponent $q$
(without the supremum term in the right hand).
Finally, we observe that the weak reverse H\"older
condition (\ref{reverse-Holder-inequality})
is self-improving (see e.g. \cite{Giaquinta}).
That is, if $\mathcal{L}$ has the property (\ref{reverse-Holder-inequality})
for some $q=q_1>2$, then it has the property for some $q=q_1+\delta$, where
$\delta>0$ depends only on $n$, $q_1$, $N$ and the Lipschitz character of $\Omega$.
Consequently, by Theorem \ref{theorem-shen}, we obtain $\|\nabla u\|_{L^p(\Omega)}
\le C \, \| f\|_{L^p(\Omega)}$ for any $2<p<q +\delta$.
This completes the proof of Theorem \ref{main-theorem-3}.
\qed

\begin{remark}\label{duality-remark}
{\rm
Let $\mathcal{L}^*$ denote the adjoint of $\mathcal{L}$.
Suppose that $u, v \in W^{1,2}_0(\Omega)$ and 
$\mathcal{L}(u)=\text{div} (f)$ and $\mathcal{L}^* (v)=\text{div} (g)$
in $\Omega$ for some $f=(f_i^\alpha), g=(g_i^\alpha)\in L^2(\Omega)$. Then
\begin{equation}\label{duality}
\int_\Omega f_i^\alpha \cdot \frac{\partial v^\alpha}{\partial x_i}\, dx
=-\int_\Omega a_{ij}^{\alpha\beta} \frac{\partial u^\beta}{\partial x_j}
\cdot \frac{\partial v^\alpha}{\partial x_i}\, dx
=\int_\Omega g_i^\alpha \cdot \frac{\partial u^\alpha}{\partial x_i}\, dx.
\end{equation}
It follows from (\ref{duality}) by duality that
if the estimate $\|\nabla u\|_{L^p(\Omega)}
\le C \| f\|_{L^p(\Omega)}$ holds for any $f\in L^p(\Omega)$ and some $p>2$, then
$\|\nabla v\|_{L^q(\Omega)} \le C \| g\|_{L^q(\Omega)}$ for any $g\in L^2(\Omega)$,
where $q=p^\prime$.
By a density argument one may deduce that for any $g\in L^q(\Omega)$,
there exists $v\in W^{1,q}_0(\Omega)$ such that $\mathcal{L}^* (v)
=\text{div} (g)$ in $\Omega$ and $\|\nabla v\|_{L^q(\Omega)}\le C \| g\|_{L^q(\Omega)}$.
The duality argument above also gives the uniqueness of such solutions.
}
\end{remark}

\begin{remark}\label{Cacciopoli-remark}
{\rm
Under the conditions (1) and (2) in Theorem \ref{main-theorem-3}, the well known
Cacciopoli's inequality
\begin{equation}\label{Cacciopoli-inequality}
\int_{B\cap \Omega} |\nabla u|^2\, dx
\le \frac{C}{r^2}\int_{2B\cap\Omega} |u|^2\, dx
\end{equation}
holds for any $u\in W^{1,2}(3B\cap\Omega)$
 satisfying $\mathcal{L}(u)=0$ in $3B\cap\Omega$ and
$u=0$ in $3B\cap\partial\Omega$, where
$B=B(y,r)$ with $y\in \overline{\Omega}$
and $0<r<cr_0$. 
By Sobolev inequality this implies that
\begin{equation}\label{reverse-1}
\left\{\frac{1}{r^n}\int_{B\cap\Omega} |\nabla u|^2\, dx \right\}^{1/2}
\le C
\left\{\frac{1}{r^n}\int_{2B\cap\Omega} |\nabla u|^{q}\, dx \right\}^{1/q},
\end{equation}
for any $ 2n/(n+2)\le q<2$ if $n\ge 3$, and for any $1<q<2$ if $n=2$.
It follows that the weak reverse H\"older inequality (\ref{reverse-Holder-inequality}) holds
for some $q>2$ and $N>0$, which depend only on $n$, $m$, $\mu$ and the Lipschitz
character of $\Omega$ \cite{Giaquinta}.
}
\end{remark}

\section{$W^{1,p}$ estimates in the non-periodic setting}
\setcounter{equation}{0}

Following \cite{Shen-2005},
as an application of Theorem \ref{main-theorem-3}, we obtain the $W^{1,p}$
estimate in the non-periodic setting
for elliptic systems with VMO coefficients.
Recall that $A\in {\rm VMO}(\brn)$ if $\lim_{t\to 0} \omega(t)=0$, where
\begin{align}\label{e1.7}
\omega(t)=\sup\limits_{\stackrel{x\in \RD}{0<r<t}}\frac{1}{|B(x,r)|}
\int_{B(x,r)}\Big|A(y)-\frac{1}{|B(x,r)|}\int_{B(x,r)}A(z)dz \Big|dy.
\end{align}

\begin{theorem}\label{theorem-3.1}
Let $\Omega$ be a bounded Lipschitz domain in $\brn$.
Let $\mathcal{L}=-\text{\rm div}(A(x)\nabla)$ with $A(x)=(a_{ij}^{\alpha\beta} (x))$
and $1\le i, j\le n$, $1\le \alpha, \beta\le m$.
Suppose that (1) $\|A\|_{L^\infty(\mathbb{R}^n)} \le \mu^{-1}$; (2) estimate
(\ref{quadratic-estimate}) holds for any $\phi\in W^{1,2}_0(\mathbb{R}^n)$;
(3) $A=A^*$; and
(4) $A\in {\rm VMO}(\brn)$.
Then there exists $\delta>0$ such that
for any $f\in L^p(\Omega)$ with $|\frac{1}{p}-\frac12|< \frac{1}{2n} +\delta$, 
there exists a unique
$u\in W^{1,p}_0(\Omega)$ satisfying $\mathcal{L}(u)=\text{\rm div}(f)$ in 
$\Omega$. Moreover, the solution $u$ satisfies
$\|\nabla u\|_{L^p(\Omega)} \le C \| f\|_{L^p(\Omega)}$.
\end{theorem}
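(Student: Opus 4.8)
The plan is to reduce Theorem~\ref{theorem-3.1} to the weak reverse H\"older inequality (\ref{reverse-Holder-inequality}) with the single exponent $q=p_n:=\frac{2n}{n-1}$, via Theorem~\ref{main-theorem-3} and a duality argument. Conditions (1) and (2) give, through Lax--Milgram, the existence, uniqueness and $L^2$ estimate of the $W^{1,2}_0$ solution. If (\ref{reverse-Holder-inequality}) holds with $q=p_n$ for every local $W^{1,2}$ solution $w$ of $\mathcal{L}(w)=0$ (vanishing on $3B\cap\partial\Omega$ in the boundary case), then Theorem~\ref{main-theorem-3} applied with $q=p_n$ gives existence, uniqueness in $W^{1,p}_0(\Omega)$ and $\|\nabla u\|_{L^p(\Omega)}\le C_p\|f\|_{L^p(\Omega)}$ for $2<p<p_n+\delta$. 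Since $A=A^*$ the adjoint $\mathcal{L}^*=\mathcal{L}$ satisfies the same hypotheses and hence the same bound, so by the duality and density arguments of Remark~\ref{duality-remark} we obtain the estimate, existence and uniqueness for $(p_n+\delta)'<p<2$ as well. As $(p_n)'=\frac{2n}{n+1}$ and $\frac1{p_n}=\frac12-\frac1{2n}$, the union of the two ranges, after shrinking $\delta$, is exactly $|\frac1p-\frac12|<\frac1{2n}+\delta$, with the cases $p>2$, $p=2$, $p<2$ matching up without ambiguity.

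Thus everything comes down to proving (\ref{reverse-Holder-inequality}) with $q=p_n$ for $w\in W^{1,2}(3B\cap\Omega)$ satisfying $\mathcal{L}(w)=0$ in $3B\cap\Omega$ and $w=0$ on $3B\cap\partial\Omega$, in the two cases $3B\subset\Omega$ and $B=B(y,r)$ with $y\in\partial\Omega$, $0<r<r_0$. The interior case is classical: for a second-order elliptic system with VMO coefficients one has, for every finite $q$, the scale-invariant local estimate $\big(\frac1{|B|}\int_B|\nabla w|^q\big)^{1/q}\le C\big(\frac1{|2B|}\int_{2B}|\nabla w|^2\big)^{1/2}$ whenever $\mathcal{L}(w)=0$ in $3B\subset\Omega$, with $C$ depending on $q$, $n$, $m$, $\mu$ and the VMO modulus $\omega$ of $A$ (see the references on elliptic operators with VMO coefficients cited in the Introduction, e.g. \cite{Byun-Wang-2004,Dong-2010}); in particular this holds with $q=p_n$. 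There is no harm in letting constants depend on $\omega$ here, since $\mathcal{L}$ is a fixed operator --- this is precisely the point where the argument for Theorem~\ref{theorem-3.1} is easier than that for Theorems~\ref{main-theorem-1}--\ref{main-theorem-2}, where uniformity in $\varepsilon$ is required and a homogenization-based argument is needed instead.

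The substance is therefore the \emph{boundary} reverse H\"older inequality with the sharp exponent $p_n$, which I would establish by the coefficient-freezing / real-variable method of \cite{Caffarelli-Peral}, in the form used in \cite{Shen-2005}. Two ingredients enter. First, the analogous boundary reverse H\"older inequality, with \emph{some} exponent $p_n+\varepsilon_0>p_n$, for the \emph{constant-coefficient} symmetric system $\mathcal{L}_0=-\text{div}(A_0\nabla)$ in a Lipschitz domain; this is the sharp boundary regularity of symmetric constant-coefficient elliptic systems --- essentially the $L^2$ regularity estimate, which is why the critical exponent is $p_n=\frac{2n}{n-1}$ --- and is the deep known input (cf. \cite{Shen-2008} and the references therein; when $m=1$ it is the result of \cite{Jerison-Kenig-1995}). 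Second, the VMO hypothesis: given a boundary ball $B=B(y,r)$ with $r$ small, one lets $A_0$ be the average of $A$ over $2B\cap\Omega$ and lets $v$ solve $\mathcal{L}_0(v)=0$ in $2B\cap\Omega$ with $v=w$ on $\partial(2B\cap\Omega)$; then $w-v$ solves $\mathcal{L}_0(w-v)=\text{div}\big((A_0-A)\nabla w\big)$ with zero boundary data, and, after an averaging and using the unconditional reverse H\"older inequality for some $q_1>2$ from Remark~\ref{Cacciopoli-remark}, the datum $(A_0-A)\nabla w$ is controlled by $\omega(r)$ times an $L^{q_1}$-average of $\nabla w$ over $2B\cap\Omega$.

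One then estimates $\nabla v$ by the constant-coefficient reverse H\"older inequality above, and $\nabla(w-v)$ by the $W^{1,q}$ estimate for $\mathcal{L}_0$ in a Lipschitz domain with small divergence datum, and runs the iteration of \cite{Caffarelli-Peral,Shen-2005} (at scales where $\omega(r)$ is small) to upgrade the exponent from $q_1$ up to $p_n$, which yields (\ref{reverse-Holder-inequality}) with $q=p_n$ and completes the proof. The main obstacle is exactly this boundary step: reaching the \emph{sharp} exponent $p_n$, rather than merely some $q>2$ as provided automatically by Remark~\ref{Cacciopoli-remark}, forces one to invoke the full constant-coefficient Lipschitz-domain regularity theory for symmetric systems and to track the VMO perturbation carefully up to the boundary.
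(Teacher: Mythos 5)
Your proposal is correct and follows essentially the same route as the paper: reduce via Theorem~\ref{main-theorem-3} and the duality of Remark~\ref{duality-remark} to the boundary weak reverse H\"older inequality with $q=p_n$, handle the interior case by standard VMO theory, and prove the boundary case by freezing coefficients to a constant symmetric matrix (for which the sharp Lipschitz-domain $L^2$ regularity theory gives the reverse H\"older inequality at exponent $p_n+\delta$, the paper's Lemma~\ref{constant-coefficient-lemma}), controlling the difference in $L^2$ via energy estimates, the self-improved Caccioppoli reverse H\"older inequality, and the VMO smallness of $A-A_0$, then concluding with the Caffarelli--Peral iteration (the paper's Theorem~\ref{CP-theorem}). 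The only cosmetic difference is that the paper needs (and obtains) only an $L^2$ bound on $\nabla(u-v)$ as input to Theorem~\ref{CP-theorem}, whereas you phrase that step as a $W^{1,q}$ estimate for $\mathcal{L}_0$ with small divergence datum; the $L^2$ energy estimate suffices and is what is actually required.
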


In view of Remark \ref{duality-remark} and the assumption $A^*=A$,
it suffices to prove Theorem \ref{theorem-3.1} for $2<p<p_n +\delta$, where
$p_n=\frac{2n}{n-1}$.
Furthermore, by Theorem \ref{main-theorem-3},
 we only need to establish the weak reverse H\"older
estimate in condition (3) in Theorem \ref{main-theorem-3} for $q=p_n$.
Note that under the condition $A\in {\rm VMO}(\brn)$, 
the estimate (\ref{reverse-Holder-inequality})
in the case $3B\subset \Omega$
is well known and in fact holds for any $2<q<\infty$.
As a result Theorem \ref{theorem-3.1} follows from the following.

\begin{theorem}\label{boundary-Holder-theorem}
Let $\mathcal{L}=-\text{\rm div}(A(x)\nabla)$ with $A(x)$ satisfying the
same conditions as in Theorem \ref{theorem-3.1}.
Suppose that $w\in W^{1,2}(3B\cap\Omega)$, $\mathcal{L}(w)=0$ in
$3B\cap\Omega$ and $w=0$ on $3B\cap\partial\Omega$, where $B=B(Q,r)$
with $Q\in \partial\Omega$ and $0<r<cr_0$.
Then $|\nabla w|\in L^{p_n}(B\cap\Omega)$ and
estimate (\ref{reverse-Holder-inequality}) holds.
\end{theorem}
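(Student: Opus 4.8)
The plan is to verify the weak reverse H\"older inequality (\ref{reverse-Holder-inequality}) with exponent $q=p_n=\frac{2n}{n-1}$ for a solution $w$ of $\mathcal{L}(w)=0$ in $3B\cap\Omega$ with $w=0$ on $3B\cap\partial\Omega$, where $B=B(Q,r)$, $Q\in\partial\Omega$ and $0<r<cr_0$ (the interior case $3B\subset\Omega$ being the classical VMO $W^{1,q}$ theory, valid for every $q<\infty$). Using the Lipschitz character of $\Omega$, after translation and rotation one may assume $2B\cap\Omega$ sits in a Lipschitz graph region; writing $D_s=B(Q,s)\cap\Omega$ and $\Delta_s=B(Q,s)\cap\partial\Omega$, the statement reduces to the scale-invariant bound
\begin{equation*}
\Big(\frac{1}{|D_r|}\int_{D_r}|\nabla w|^{p_n}\Big)^{1/p_n}\le N\Big(\frac{1}{|D_{2r}|}\int_{D_{2r}}|\nabla w|^2\Big)^{1/2}.
\end{equation*}
I would obtain this by freezing the coefficients and perturbing off the constant-coefficient case.

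First I would treat the model operator. Let $\bar A$ be the mean of $A$ over $D_{2r}$; by (\ref{e1.3}) and $A=A^*$, $\bar A$ is a constant symmetric strongly elliptic matrix, so $\mathcal{L}_0=-\text{div}(\bar A\nabla)$ is covered by the $L^2$ solvability theory for the Dirichlet and regularity problems on Lipschitz domains, which is where the symmetry hypothesis enters (via the Rellich--Ne\v{c}as identity; constant coefficients are the trivial periodic case of the smooth-coefficient results used in \cite{Kenig-Shen-2}). For $v$ with $\mathcal{L}_0(v)=0$ in $D_{2r}$ and $v=0$ on $\Delta_{2r}$ I would establish the reverse H\"older inequality with exponent $p_n$: the data $v|_{\partial D_{2r}}$ vanishes on $\Delta_{2r}$ and, on the remaining ``cap'', is bounded in $W^{1,\infty}$ by interior and Caccioppoli estimates (\ref{Cacciopoli-inequality}), so solving the regularity problem gives $\|(\nabla v)^*\|_{L^2(\Delta_{3r/2})}\le Cr^{-1/2}(\int_{D_{2r}}|\nabla v|^2)^{1/2}$; combining this with the interior gradient bound $|\nabla v(x)|\le C\rho(x)^{-n/2}(\int_{B(x,\rho(x)/2)}|\nabla v|^2)^{1/2}$ valid for $\mathcal{L}_0$-solutions ($\rho(x)=\text{dist}(x,\partial\Omega)$) and integrating over nontangential cones in the now-standard fashion (as in \cite{Shen-2008}, whose boundary argument for $m=1$ uses only these two ingredients) yields $(\int_{D_r}|\nabla v|^{p_n})^{1/p_n}\le Cr^{-1/2}(\int_{D_{2r}}|\nabla v|^2)^{1/2}$, i.e.\ the desired bound for $v$. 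The exponent $\frac{2n}{n-1}$ is forced here, being the best one extractable from an $L^2$ nontangential estimate, consistent with the sharpness noted after Theorem~\ref{main-theorem-2}.

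Next I would handle the perturbation. Given $w$, let $v$ solve $\mathcal{L}_0(v)=0$ in $D_{2r}$ with $v=w$ on $\partial D_{2r}$; in particular $v=0$ on $\Delta_{2r}$ since $w$ vanishes there, so the model estimate applies to $v$. Moreover $w-v\in W^{1,2}_0(D_{2r})$ and $\mathcal{L}_0(w-v)=\text{div}\big((\bar A-A)\nabla w\big)$, whence $\|\nabla(w-v)\|_{L^2(D_{2r})}\le C\|(\bar A-A)\nabla w\|_{L^2(D_{2r})}$. Since $\|A\|_{L^\infty}\le\mu^{-1}$ and $A\in{\rm VMO}(\brn)$, while $\nabla w$ enjoys the self-improved integrability $\nabla w\in L^{2+\eta}_{\rm loc}$ with the attendant reverse H\"older bound (Remark~\ref{Cacciopoli-remark}, \cite{Giaquinta}), H\"older's inequality together with the usual interpolation of the VMO modulus against the $L^\infty$ bound gives $(\frac{1}{|D_{2r}|}\int_{D_{2r}}|(\bar A-A)\nabla w|^2)^{1/2}\le\theta(r)(\frac{1}{|D_{3r}|}\int_{D_{3r}}|\nabla w|^2)^{1/2}$ with $\theta(r)\to0$ as $r\to0$. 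Applying the model estimate to $v$ and adding the error, one gets $(\frac{1}{|D_r|}\int_{D_r}|\nabla w|^{p_n})^{1/p_n}\le C(\frac{1}{|D_{3r}|}\int_{D_{3r}}|\nabla w|^2)^{1/2}$ first for $r$ below a threshold determined by the VMO modulus and the Lipschitz character, and then for all $0<r<cr_0$ by covering and rescaling. This is (\ref{reverse-Holder-inequality}) with $q=p_n$, proving Theorem~\ref{boundary-Holder-theorem}; Theorem~\ref{theorem-3.1} then follows through Theorem~\ref{main-theorem-3} and the duality of Remark~\ref{duality-remark}. (The same ingredients could equally be fed into the real-variable scheme of \cite{Shen-2005}, bypassing the iteration.)

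The hard part will be the model estimate: the reverse H\"older inequality with the sharp exponent $p_n$ for the constant-coefficient symmetric system on a Lipschitz domain, which rests on the $L^2$ Dirichlet/regularity theory and on the delicate passage from boundary nontangential estimates to interior $L^{p_n}$ bounds --- the Lipschitz geometry both forbids a larger exponent and defeats a naive pointwise argument, and for $m>1$ one needs the systems form of the $L^2$ solvability, available precisely because $\bar A$ is symmetric. A secondary point is making the VMO smallness $\theta(r)$ quantitative and uniform down to the scale at which the perturbation closes.
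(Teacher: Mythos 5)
Your overall strategy---freezing the coefficients at their average and perturbing off the constant-coefficient operator---is exactly what the paper does (Lemma~\ref{approx-lemma}). But there are two concrete gaps in how you carry out both halves of that plan.

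\textbf{The model estimate.} You propose to get the reverse H\"older bound with exponent $p_n=\tfrac{2n}{n-1}$ for the constant-coefficient solution $v$ in a single step, from the $L^2$ regularity estimate $\|(\nabla v)^*\|_{L^2}\lesssim r^{-1/2}\|\nabla v\|_{L^2}$ together with the interior bound $|\nabla v(x)|\le C\rho(x)^{-n/2}\|\nabla v\|_{L^2}$, ``integrating over nontangential cones in the now-standard fashion.'' But that combination falls short numerically. Writing $\int_{D_\rho}|\nabla v|^{q}=\int_{D_\rho}|\nabla v|^2\,|\nabla v|^{q-2}$ and bounding $|\nabla v|^{q-2}$ by $\rho(x)^{-n(q-2)/2}$ forces the vertical integral $\int_0^{c\rho}t^{-n(q-2)/2}\,dt$ to converge, which requires $q<2+\tfrac{2}{n}$; and $2+\tfrac{2}{n}<2+\tfrac{2}{n-1}=p_n$. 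To actually reach $p_n$ one needs either an iteration that gradually improves the interior-decay exponent (this is precisely what Lemma~\ref{iteration-lemma} does in Section~4, and it requires multiple passes), or a different mechanism entirely. The paper's Lemma~\ref{constant-coefficient-lemma} takes the second route: it uses the $L^2$ Rellich estimates of \cite{Gao-1991} together with the square-function estimates of \cite{Dahlberg-Kenig-Pipher-Verchota} to place $\nabla w$ in $W^{1/2,2}$, and then the Sobolev embedding $W^{1/2,2}\hookrightarrow L^{p_n}$ gives $p_n$ in one shot, after which the weak reverse H\"older self-improvement of \cite{Giaquinta} upgrades the exponent to $p_n+\delta$. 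That extra $\delta$ is not a luxury---see the next point.

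\textbf{The perturbation.} You write that, with $\|\nabla(w-v)\|_{L^2(D_{2r})}\le\theta(r)\|\nabla w\|_{L^2(D_{3r})}$ and the model estimate for $v$, ``applying the model estimate to $v$ and adding the error'' yields the $L^{p_n}$ bound on $\nabla w$. This does not follow: the error $\nabla(w-v)$ is only controlled in $L^2$, and $L^2$-smallness of an additive error does not transfer an $L^{p_n}$ bound from $\nabla v$ to $\nabla w$. The paper closes this step through Theorem~\ref{CP-theorem} (the Caffarelli--Peral real-variable absorption scheme), which requires precisely the two estimates you have---$L^2$-smallness of $f-h$ and an $L^p$ bound on $h$---but delivers the conclusion only for exponents $q$ \emph{strictly below} $p$. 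This is why Lemma~\ref{constant-coefficient-lemma} must be proved with exponent $p_n+\delta$ rather than $p_n$: one then applies Theorem~\ref{CP-theorem} with $p=p_n+\delta$ and $q=p_n$. Your parenthetical remark acknowledges the real-variable scheme as an alternative, but the main line of your argument relies on the unjustified ``adding the error'' step, and even if you fall back to the Caffarelli--Peral machinery you still need the model estimate at an exponent strictly above $p_n$, which your construction does not supply.

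In short, your coefficient-freezing framework matches the paper's, but (i) the claimed one-step derivation of the constant-coefficient reverse H\"older estimate at exponent $p_n$ is numerically too weak, and (ii) the perturbation cannot be closed by naive superposition; both defects are repaired in the paper by the $W^{1/2,2}$ square-function route with a subsequent self-improvement, followed by the Caffarelli--Peral absorption theorem.
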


Theorem \ref{boundary-Holder-theorem} is proved by a perturbation argument.
We first show that the desired estimate holds for elliptic systems 
$L=-\text{div} (\bar{A}\nabla)$ with constant coefficients
$\bar{A}=(\bar{a}_{ij}^{\alpha\beta})$
satisfying the Legendre-Hadamard ellipticity condition:
\begin{equation}\label{Hardamard-condition}
\mu |\xi|^2 |\eta|^2 \le \bar{a}_{ij}^{\alpha\beta} \xi_i\xi_j \eta^\alpha\eta^\beta
\le \mu^{-1} |\xi|^2|\eta|^2,
\end{equation}
for any $ \xi=(\xi_i)\in \brn, \, \eta=(\eta^\alpha)\in \mathbb{R}^m$.
It is known that the coercive estimate (\ref{quadratic-estimate}) and $\|A\|_\infty
<\infty$
imply the Legendre-Hadamard condition. In particular, 
if $\bar{a}_{ij}^{\alpha\beta} =\frac{1}{|E|}\int_E a_{ij}^{\alpha\beta} (x)\, dx$
and $(a_{ij}^{\alpha\beta})$ satisfies the conditions in Theorem \ref{theorem-3.1},
then $\bar{A}=(\bar{a}_{ij}^{\alpha\beta})$ satisfies
(\ref{Hardamard-condition}) and $(\bar{A})^* =\bar{A}$. 

\begin{lemma}\label{constant-coefficient-lemma}
Let $L=\text{\rm div}(\bar{A}\nabla)$ with constant coefficient
matrix $\bar{A}=(\bar{a}_{ij}^{\alpha\beta})$ satisfying (\ref{Hardamard-condition})
and $\bar{A}^*=\bar{A}$.
Suppose that $w\in W^{1,2}(3B\cap\Omega)$, $L(w)=0$ in $3B\cap\Omega$ and
$w=0$ on $3B\cap\partial\Omega$, where $B=B(y, r)$ with $y\in \overline{\Omega}$
and $0<r<cr_0$.
Then $|\nabla w|\in L^{p_n+\delta}(B\cap\Omega)$
and estimate (\ref{reverse-Holder-inequality}) holds for $q=p_n+\delta$,
where $\delta$ and $N$ in (\ref{reverse-Holder-inequality})
are positive constants depending only on $n$, $m$, $\mu$ and the Lipschitz character of $\Omega$.
\end{lemma}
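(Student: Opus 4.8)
The plan is to reduce, using the Lipschitz structure of $\Omega$, to the case of a boundary ball $B=B(Q,r)$ with $Q\in\partial\Omega$ and $0<r<cr_0$: when $3B\subset\Omega$ the inequality is the classical interior reverse H\"older estimate for the constant-coefficient system $L$ (valid for every exponent, since interior solutions are smooth), and when $\mathrm{dist}(y,\partial\Omega)<3r$ one passes to a nearby boundary ball of comparable size. The analytic input I would invoke is the solvability of the $L^{q_0}$ regularity problem for $L$ in the Lipschitz domain $\Omega$ for some $q_0>2$ depending only on $n$, $m$, $\mu$ and the Lipschitz character of $\Omega$; this is available because $\bar A=\bar A^*$ satisfies the Legendre-Hadamard condition, so Rellich-type identities give the $L^2$-solvability of the Dirichlet and regularity problems, which self-improves to an open interval of exponents around $2$. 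Multiplying $w$ by a cutoff equal to $1$ on $B$ and supported in $2B$, and absorbing the resulting $\mathrm{div}(g)+h$ right-hand side by a Newtonian potential for $L$, this yields, with $\Delta(Q,\varrho)=B(Q,\varrho)\cap\partial\Omega$ and $(\nabla w)^*$ the non-tangential maximal function of $\nabla w$,
\begin{equation*}
\int_{\Delta(Q,\frac{3}{2}r)}\big|(\nabla w)^*\big|^{q_0}\,d\sigma \le C\, r^{\,n-1-nq_0/2}\Big(\int_{2B\cap\Omega}|\nabla w|^2\,dx\Big)^{q_0/2}.
\end{equation*}

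Writing $\rho(x)=\mathrm{dist}(x,\partial\Omega)$ and using $|\nabla w(x)|\le(\nabla w)^*(\pi(x))$ for a nearest point $\pi(x)\in\partial\Omega$, I would combine this with the elementary tubular-neighbourhood bound $\int_{\{x\in 2B\cap\Omega:\ \rho(x)\le s\}}F(\pi(x))\,dx\le C\,s\int_{\Delta(Q,\frac{3}{2}r)}F\,d\sigma$ (valid for Lipschitz domains, $F\ge0$). Taking $s=r$ gives the reverse H\"older inequality (\ref{reverse-Holder-inequality}) at the exponent $q_0$ itself; taking $s$ small gives the boundary-layer decay estimate
\begin{equation*}
\int_{\{x\in B\cap\Omega:\ \rho(x)\le s\}}|\nabla w|^{q_0}\,dx \le C\,\frac{s}{r}\, r^{\,n(1-q_0/2)}\Big(\int_{2B\cap\Omega}|\nabla w|^2\,dx\Big)^{q_0/2},\qquad 0<s\le r.
\end{equation*}

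The heart of the argument is then an iteration on the exponent. Suppose (\ref{reverse-Holder-inequality}) is already known, for all boundary solutions as above, with some exponent $q_k\ge q_0$; I claim it then holds for every $q$ with $2<q<2+q_k\,\dfrac{nq_0-2(n-1)}{nq_0}$. First, combining the induction hypothesis with H\"older's inequality gives, on any boundary ball $B'$ of radius $r'\le cr_0$, the bound $\int_{B'\cap\Omega}|\nabla w|^{q_k}\le C\,(r')^{\,n(1-q_k/q_0)}\big(\int_{2B'\cap\Omega}|\nabla w|^{q_0}\big)^{q_k/q_0}$. Next, fixing $q$, write $|\nabla w|^q=|\nabla w|^{q_k}|\nabla w|^{q-q_k}$, bound the second factor by the interior gradient estimate $|\nabla w(x)|\le C\rho(x)^{-n/2}\|\nabla w\|_{L^2(2B\cap\Omega)}$ (cf. (\ref{interior-estimate-1})), and split $B\cap\Omega$ into the dyadic layers $\{2^{-j-1}r<\rho\le 2^{-j}r\}$. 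Covering the layer $\{\rho\le 2^{-j}r\}$ by $O(2^{j(n-1)})$ boundary balls of radius $\sim 2^{-j}r$, applying the last displayed bound on each, summing by $\sum_i t_i^{q_k/q_0}\le(\sum_i t_i)^{q_k/q_0}$ (since $q_k\ge q_0$) and bounded overlap, and then inserting the boundary-layer decay estimate, one bounds $\int_{\{\rho\le 2^{-j}r\}}|\nabla w|^{q_k}$ by a geometric term that is summable against $(2^{-j}r)^{-n(q-q_k)/2}$ over $j$ precisely under the stated constraint on $q$; collecting the powers of $r$ then reproduces (\ref{reverse-Holder-inequality}) at exponent $q$. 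Since the map $t\mapsto 2+t\,\frac{nq_0-2(n-1)}{nq_0}$ is increasing with fixed point $\frac{nq_0}{n-1}$, strictly larger than $p_n=\frac{2n}{n-1}$ because $q_0>2$, iterating from $q_0$ finitely many times gives (\ref{reverse-Holder-inequality}) at some exponent $q=p_n+\delta$, so that $|\nabla w|\in L^{p_n+\delta}(B\cap\Omega)$; all constants depend only on the quantities allowed. (Were only the $L^2$ regularity estimate available one would instead reach every exponent below $p_n$ and conclude by Gehring's self-improvement.)

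I expect the real difficulty to be the first step: realizing the local solution $w$ as the restriction of a global $W^{1,2}$ solution of $L$ in $\Omega$, controlling the commutator term $\mathrm{div}(g)+h$ generated by the cutoff, and comparing the non-tangential maximal functions of $w$ and of the cutoff times $w$ on $\Delta(Q,\frac{3}{2}r)$, so that the global $L^{q_0}$ regularity estimate applies and yields the stated local bound on $(\nabla w)^*$; this uses the $L^{q_0}$-boundedness of the Newtonian potential for the constant-coefficient operator $L$ and uniqueness of $W^{1,2}$ solutions. The remainder is careful bookkeeping to ensure that every constant produced in the iteration depends only on $n$, $m$, $\mu$ and the Lipschitz character of $\Omega$.
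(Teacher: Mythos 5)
Your argument is correct in substance, but it takes a genuinely different route from the paper's proof. The paper never sets up an $L^{q_0}$ nontangential estimate: working on the truncated graph domains $D_{tr}$, $t\in(1,2)$, it combines the $L^2$ boundary regularity estimate of Gao with the square function estimates of Dahlberg--Kenig--Pipher--Verchota to conclude that $\nabla w\in W^{1/2,2}(D_{tr})$ and hence, by the Sobolev embedding $W^{1/2,2}\hookrightarrow L^{p_n}$ with $p_n=\tfrac{2n}{n-1}$, that $\|\nabla w\|_{L^{p_n}(D_{tr})}\lesssim\|\nabla_{tan}w\|_{L^2(\partial D_{tr})}$; since $w$ vanishes on $\Delta_{3r}$ this boundary integral is supported off the flat bottom, and averaging in $t$ over $(1,2)$ converts it to a volume integral over $D_{2r}$, giving the reverse H\"older estimate at $q=p_n$ in one stroke, with the extra $\delta$ supplied by Gehring self-improvement. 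Your proof instead mirrors the mechanism the authors use for Lemma~\ref{iteration-lemma} in the $\varepsilon$-dependent case (where square function estimates are unavailable): take an $L^{q_0}$ regularity estimate for some $q_0>2$ as input, localize it via a cutoff and Newtonian potential, and bootstrap by writing $|\nabla w|^q=|\nabla w|^{q_k}|\nabla w|^{q-q_k}$, controlling the second factor by the interior bound $|\nabla w|\lesssim\rho^{-n/2}\|\nabla w\|_{L^2}$ and slicing into dyadic layers. Your iteration map $q_k\mapsto 2+q_k\,\tfrac{nq_0-2(n-1)}{nq_0}$ and the paper's $q_k\mapsto q_0+q_k/n$ share the fixed point $\tfrac{nq_0}{n-1}>p_n$, so both reach $p_n+\delta$ in finitely many steps, and you reach it without a separate Gehring step. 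The trade-off is exactly at the point you flag: the paper sidesteps the cutoff and commutator bookkeeping by averaging over the truncations $D_{tr}$, and the $L^{q_0}$ regularity solvability for $q_0>2$ under Legendre--Hadamard plus symmetry (not full ellipticity) is not a free self-improvement from $L^2$ --- its standard proof goes through the same square function and fractional Sobolev technology that the paper invokes directly, so your route does not actually save analytic input. What it does buy is a uniform scheme treating Lemma~\ref{constant-coefficient-lemma} and Lemma~\ref{iteration-lemma} by the same bootstrap, at the cost of a more involved localization.
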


\begin{proof}
Let $\psi: \mathbb{R}^{n-1}\to  \mathbb{R}$ be a Lipschitz function 
such that $\psi(0)=0$ and $\|\nabla\psi\|_\infty\le M$.  
For $r>0$, let
\begin{equation}\label{definition-of-Delta}
\aligned
&\Delta_r=\big\{(x',\psi(x'))\in \mathbb{R}^n: \  |x'|<r\big\},\\
&D_r=\big\{(x',t)\in \RD: |x'|<r \  {\rm and}\  \psi(x')<t<\psi(x')+(M+10n) r\big\}.
\endaligned
\end{equation}
Suppose that $ w\in W^{1,2}(D_{3r})$, 
${L}(w)=0$ in $D_{3r}$ and $w=0$ on $\Delta_{3r}$. We will show that
\begin{equation}\label{3.3.1}
\left\{\frac{1}{r^n}\int_{D_r} |\nabla w|^{p_n}\, dx \right\}^{1/p_n}
\le C \left\{ \frac{1}{r^n} \int_{D_{2r}} |\nabla w|^2\, dx \right\}^{1/2},
\end{equation}
where $C$ depends only on $n$, $m$, $\mu$ and $M$.
This, together with the interior estimates, yields (\ref{reverse-Holder-inequality})
for $q=p_n$
by a change of coordinates.
The case $q=p_n+\delta$ follows by the self-improvement property
of the weak reverse H\"older inequality.

To see (\ref{3.3.1}), we apply the $L^2$ estimates in \cite{Gao-1991}
 as well as square function estimates in \cite{Dahlberg-Kenig-Pipher-Verchota}
in the Lipschitz domain $D_{tr}$, where $t\in (1,2)$.
It follows that $\nabla w\in W^{1/2,2}(D_{tr})$ and by Sobolev imbedding,
 $|\nabla w|\in L^{p_n}(D_{tr})$.
Moreover, we obtain
\begin{equation}\label{3.3.2}
\aligned
\left\{\int_{D_{tr}} |\nabla w|^{p_n}\, dx \right\}^{1/p_n}
&\le C \left\{\int_{\partial D_{tr}} |\nabla w|^2\, d\sigma \right\}^{1/2}\\
& \le C \left\{\int_{\partial D_{tr}} |\nabla_{tan} w|^2\, d\sigma \right\}^{1/2},
\endaligned
\end{equation}
where $\nabla_{tan} w$ denotes the tangential gradient of $w$ on $\partial D_{tr}$
and $C$ depends only on $n$, $m$, $\mu$ and $M$.
Since $w=0$ on $\Delta_{3r}$, this gives
\begin{equation}\label{3.3.3}
\left\{ \int_{D_r} |\nabla w|^{p_n}\, dx \right\}^{2/p_n}
\le C\int_{\partial D_{tr}\setminus \Delta_{3r}} |\nabla w|^2\, d\sigma.
\end{equation}
Finally, we integrate both sides of (\ref{3.3.3}) with respect to $t$ over $(1,2)$
to obtain
\begin{equation}\label{3.3.4}
\left\{ \int_{D_r} |\nabla w|^{p_n}\, dx \right\}^{2/p_n}
\le \frac{C}{r} \int_{D_{2r}} |\nabla w|^2\, dx,
\end{equation}
from which estimate (\ref{3.3.1}) follows.
\end{proof}

\begin{lemma}\label{approx-lemma}
Let $\mathcal{L}=-\text{\rm div} (A(x)\nabla)$ with $A(x)$ satisfying the same conditions
as in Theorem \ref{theorem-3.1}.
Then there exist  a function $\eta(r)$ and some constants $N>0$
and $p>p_n$ with the following properties:

(1) $\lim\limits_{r\to 0} \eta(r)=0;$

(2) if $u\in W^{1,2}(3B\cap\Omega)$, $\mathcal{L}u=0$ in $3B\cap\Omega$ and $u=0$  
on $3B\cap \partial \Omega$, where $B=B(x_0,r)$ with $x_0\in \overline{\Omega}$ and $0<r<cr_0$, 
then there exists a function $v\in W^{1,p}(B\cap\Omega)$ such that

\begin{align}
\Big\{\frac{1}{r^n}\int_{B\cap\Omega}|\nabla(u-v)|^2
 \, dx \Big\}^{1/2}\leq \eta(r) 
\Big\{\frac{1}{r^n}\int_{3B\cap\Omega}|\nabla u|^2 \,dx \Big\}^{1/2},\label{3.4.1}\\
\Big\{\frac{1}{r^n}\int_{B\cap\Omega}|\nabla v|^p \, 
dx \Big\}^{1/p}\leq N
\Big\{\frac{1}{r^n}\int_{3B\cap\Omega}|\nabla u|^2 \,dx \Big\}^{1/2}. \label{3.4.2}
\end{align}
\end{lemma}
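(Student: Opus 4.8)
The plan is to prove Lemma~\ref{approx-lemma} by a standard freezing-the-coefficients argument, comparing the solution $u$ of $\mathcal{L}u=0$ with the solution $v$ of the constant-coefficient system $L_0 v=0$ obtained by averaging $A$ over an appropriate ball, and then invoking Lemma~\ref{constant-coefficient-lemma} to get the higher integrability of $\nabla v$. Concretely, fix $B=B(x_0,r)$ with $x_0\in\overline{\Omega}$, set $\bar A=\frac{1}{|2B\cap\Omega|}\int_{2B\cap\Omega}A(x)\,dx$ (or an average over a full ball if $2B\subset\Omega$), let $L_0=-\operatorname{div}(\bar A\nabla)$, and define $v\in W^{1,2}(2B\cap\Omega)$ to be the unique solution of $L_0 v=0$ in $2B\cap\Omega$ with $v=u$ on $\partial(2B\cap\Omega)$ in the sense $v-u\in W^{1,2}_0(2B\cap\Omega)$, together with $v=0$ on $2B\cap\partial\Omega$. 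Since $\bar A$ satisfies the Legendre--Hadamard condition \eqref{Hardamard-condition} and $(\bar A)^*=\bar A$ by the remark preceding Lemma~\ref{constant-coefficient-lemma}, Lemma~\ref{constant-coefficient-lemma} applies to $v$ on the scale $2B$ and yields $|\nabla v|\in L^{p_n+\delta}$ with the reverse-H\"older bound; choosing $p=p_n+\delta$ (shrinking $\delta$ if necessary) and using Cacciopoli's inequality (Remark~\ref{Cacciopoli-remark}) on the $v$ side gives \eqref{3.4.2} after passing from $2B$ to $3B$ via the energy inequality for $u$.

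The next step is the comparison estimate \eqref{3.4.1}. Writing $\phi=u-v\in W^{1,2}_0(2B\cap\Omega)$, subtract the weak formulations: for all test functions $\psi\in W^{1,2}_0(2B\cap\Omega)$,
\begin{equation*}
\int_{2B\cap\Omega}\bar A\,\nabla\phi\cdot\nabla\psi\,dx
=\int_{2B\cap\Omega}(\bar A-A(x))\,\nabla u\cdot\nabla\psi\,dx .
\end{equation*}
Testing with $\psi=\phi$ and using the coercivity \eqref{quadratic-estimate} (which holds for $\bar A$) on the left and H\"older on the right gives
\begin{equation*}
\mu\int_{2B\cap\Omega}|\nabla\phi|^2\,dx
\le\Big(\int_{2B\cap\Omega}|\bar A-A(x)|^{2s'}\,dx\Big)^{1/2s'}
\Big(\int_{2B\cap\Omega}|\nabla u|^{2s}\,dx\Big)^{1/2s}\|\nabla\phi\|_{L^2(2B\cap\Omega)},
\end{equation*}
where $s>1$ is a small exponent for which the self-improved reverse-H\"older inequality for $u$ (valid with some $q_1>2$ by Remark~\ref{Cacciopoli-remark}, and applicable since $\mathcal{L}u=0$ on $3B\cap\Omega$ with $u=0$ on $3B\cap\partial\Omega$) upgrades $\|\nabla u\|_{L^{2s}(2B\cap\Omega)}$ to be controlled by $r^{n/2s-n/2}\|\nabla u\|_{L^2(3B\cap\Omega)}$. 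Since $|\bar A-A|\le 2\mu^{-1}$ uniformly and, by the VMO hypothesis, its oscillation average is small, one gets
\begin{equation*}
\Big(\frac{1}{|2B\cap\Omega|}\int_{2B\cap\Omega}|\bar A-A(x)|^{2s'}\,dx\Big)^{1/2s'}
\le C\big(\mu^{-1}\big)^{1-1/s'}\,\omega(2r)^{1/2s'}=:\eta_0(r),
\end{equation*}
with $\eta_0(r)\to0$ as $r\to0$ by VMO. Combining these estimates, dividing by $\|\nabla\phi\|_{L^2}$, and normalizing by $r^{-n}$ yields \eqref{3.4.1} with $\eta(r)=C\,\eta_0(r)$; property (1) follows.

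I expect the main technical obstacle to be the bookkeeping near the boundary: one must ensure that the averaged matrix $\bar A$ still satisfies the coercivity \eqref{quadratic-estimate} and the Legendre--Hadamard condition (so that both Lax--Milgram for the $v$-problem and Lemma~\ref{constant-coefficient-lemma} apply), that the reverse-H\"older self-improvement for $u$ is legitimately available on $2B\cap\Omega$ given only the hypotheses on $3B\cap\Omega$, and that all the geometric quantities ($|B\cap\Omega|\sim r^n$, comparability of $2B\cap\Omega$ and $3B\cap\Omega$, the flattening of $\partial\Omega$) are controlled purely in terms of the Lipschitz character. A second delicate point is the choice of the exponent $s$: it must be small enough that $2s<q_1$ (the Meyers exponent for $u$) yet $2s'$ large enough is harmless since $|\bar A-A|$ is bounded, so the VMO smallness is genuinely used only through a small positive power $1/2s'$. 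Once these are arranged, the proof is routine; the output $p=p_n+\delta$ and $N$ depend only on $n$, $m$, $\mu$ and the Lipschitz character of $\Omega$, as required.
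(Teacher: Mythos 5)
Your proposal follows essentially the same freezing-of-coefficients argument as the paper's proof (Lemma 4.7 of \cite{Shen-2005} adapted to this setting): compare $u$ with the solution $v$ of the averaged constant-coefficient system on $2B\cap\Omega$ with matching boundary data, use Meyers/reverse-H\"older for $u$ together with the VMO smallness of $A-\bar A$ to get \eqref{3.4.1}, and invoke Lemma~\ref{constant-coefficient-lemma} plus the energy estimate to get \eqref{3.4.2}. The only differences are cosmetic: you average over $2B\cap\Omega$ while the paper averages over the full ball $B(x_0,3r)$, and you obtain the $L^{2s'}$-smallness of the oscillation from the $L^\infty$ bound on $A$ by interpolation whereas the paper cites John--Nirenberg; both routes are correct and lead to the same conclusion.
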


\begin{proof}
The proof is similar to that of Lemma 4.7 in \cite{Shen-2005}.
Suppose that $u$ satisfies the conditions of the lemma. 
We define the operator ${L} (w)=-D_ib_{ij}^{\alpha\beta}D_jw^\beta$, 
 where $D_i =\partial/\partial x_i$ and
$b_{ij}^{\alpha\beta}$ is a constant given by
\begin{align}
b_{ij}^{\alpha\beta}=\frac{1}{|B(x_0,3r)|}\int_{B(x_0,3r)}a_{ij}^{\alpha\beta}(x) \, dx.
\end{align}
Then $(b_{ij}^{\alpha\beta})$ satisfies the ellipticity condition (\ref{Hardamard-condition})
and $b_{ij}^{\alpha\beta}=b_{ji}^{\beta\alpha}$. 
Let $v$ be a weak solution of ${L} (v)=0$ in $2B\cap\Omega$
 such that $u-v\in W^{1,2}_0 (2B\cap\Omega)$. 
We will prove that $v$ satisfies estimates (\ref{3.4.1}) and (\ref{3.4.2}).

We first prove (\ref{3.4.1}). Note that
\begin{align}\label{3.4.3}
L(u-v)=(L-\mathcal{L})u=
-D_i(b_{ij}^{\alpha\beta}-a_{ij}^{\alpha\beta})D_ju^{\beta} \quad \text{ in } 2B\cap \Omega.
\end{align}
It follows that
\begin{equation}\label{3.4.4}
\aligned
\int_{2B\cap\Omega} & b_{ij}^{\alpha\beta} D_j(u-v)^{\beta}D_i(u-v)^{\alpha} \, dx\\
&\leq C \sum\limits_{i,j,\alpha,\beta}\int_{2B\cap\Omega}
|b_{ij}^{\alpha\beta}-a_{ij}^{\alpha\beta}||\nabla u||\nabla(u-v)| \, dx
\endaligned
\end{equation}
Since $(b_{ij}^{\alpha\beta})$ is a constant matrix satisfying
 the Legendre-Hadamard condition (\ref{Hardamard-condition}) 
and $u-v\in W^{1,2}_0 (2B\cap\Omega)$, we have
\begin{align}
\int_{2B\cap\Omega} & b_{ij}^{\alpha\beta} D_j(u-v)^{\beta}D_i(u-v)^{\alpha} 
\, dx\geq \mu \int_{2B\cap\Omega}|\nabla(u-v)|^2 \, dx,
\end{align}
which, together with (\ref{3.4.4}), gives
\begin{equation}\label{3.4.5}
\aligned
\Big\{\frac{1}{r^n} \int_{2B\cap\Omega}&|\nabla(u-v)|^2 \, dx\Big\}^{1/2}\\
&\leq C\sum\limits_{i,j,\alpha,\beta}\Big\{\frac{1}{r^n} 
\int_{2B\cap\Omega}|b_{ij}^{\alpha\beta}-a_{ij}^{\alpha\beta}|^2
|\nabla u|^2 \, dx\Big\}^{1/2}.
\endaligned
\end{equation}
By H\"older's inequality we have
\begin{equation}\label{3.4.6}
\aligned
&\Big\{\frac{1}{r^n} \int_{2B\cap\Omega}|\nabla(u-v)|^2 \, dx\Big\}^{1/2}\\
&\leq C\sum\limits_{i,j,\alpha,\beta}\Big\{\frac{1}{r^n} 
\int_{2B\cap\Omega}|b_{ij}^{\alpha\beta}-a_{ij}^{\alpha\beta}|^{2q_0'}\, 
dx\Big\}^{1/(2q_0')}\Big\{\frac{1}{r^n} \int_{2B\cap\Omega}
|\nabla u|^{2q_0}\, {\rm d}x\Big\}^{1/(2q_0)}\\
&\leq \eta(r)\Big\{\frac{1}{r^n} \int_{3B\cap\Omega}|\nabla u|^{2}\, {\rm d}x\Big\}^{1/2},
\endaligned
\end{equation}
where $q_0>1$ and we have used the weak reverse H\"older inequality
\begin{equation}\label{3.4.7}
\left\{ \frac{1}{r^n}\int_{2B\cap \Omega} |\nabla u|^{2q_0}\, dx\right\}^{1/(2q_0)}
\le C
\left\{ \frac{1}{r^n}\int_{3B\cap \Omega} |\nabla u|^2\, dx\right\}^{1/2}.
\end{equation}
Also, the function $\eta(r)$ above is defined by
\begin{align}
\eta(r)=C\sup\limits_{x_0\in \overline{\Omega}}
\sum\limits_{i,j,\alpha,\beta}\Big\{\frac{1}{r^n}\int_{B(x_0,2r)}
|b_{ij}^{\alpha\beta}-a_{ij}^{\alpha\beta}|^{2q_0'}\,{\rm d}x\Big\}^{1/(2q_0')}.
\end{align}
We recall  that the well known Cacciopoli's inequality holds under the conditions
(1) and (2) on $A(x)$ in Theorem \ref{theorem-3.1}.
As a consequence the weak reverse H\"older inequality (\ref{3.4.7})
 holds for some $q_0>1$ (see Remark \ref{Cacciopoli-remark}).
Since $a_{ij}^{\alpha\beta}\in {\rm VMO}$,
by the John--Nirenberg inequality, 
we have $\eta(r)\to 0$ as $r\to 0$. This completes the proof of (\ref{3.4.1}).

Finally, we note that since $L(v)=0$ in $3B\cap\Omega$ and $v=u=0$ on $3B\cap\partial\Omega$,
we may deduce from Lemma \ref{constant-coefficient-lemma} that
\begin{equation}\label{3.4.8}
\aligned
\Big\{\frac{1}{r^n}\int_{B\cap \Omega}\big|\nabla v\big|^p \, dx\Big\}^{1/p}
&\leq C \Big\{\frac{1}{r^n}\int_{2B\cap \Omega}\big|\nabla v\big|^2 \, dx\Big\}^{1/2}\\
&\leq C\Big\{\frac{1}{r^n}\int_{2B\cap\Omega}\big|\nabla u\big|^2 \, dx\Big\}^{1/2},\nonumber
\endaligned
\end{equation}
for some $p=p_n+\delta$, where the last inequality follows from (\ref{3.4.5}).
This gives (\ref{3.4.2}).
We point out that $\delta>0$ depends only on $n$, $m$, $\mu$ and the Lipschitz character of $\Omega$. 
\end{proof}

With Lemma \ref{approx-lemma} at our disposal,
Theorem \ref{boundary-Holder-theorem} follows from the following theorem, as in the proof of
Theorem C in \cite[p.192]{Shen-2005}. We omit the details.

\begin{theorem}\label{CP-theorem}
Let $f:E\to \mathbb{R}^m$ be a locally square integrable function, 
where $E$ is an open set of $\RD$. 
Let $p>2$. Suppose that there exist three constants $\varepsilon>0$ 
and $\alpha, N>1$ such that for every ball $B=B(x_0,r)$ 
with $\alpha B\subset E$, there exists a function $h=h_B\in L^p(B)$ with the properties:

\begin{align}
\Big\{\frac{1}{|B|}\int_B |f-h|^2\, dx \Big\}^{1/2}\leq 
\varepsilon \Big\{\frac{1}{|\alpha B|}\int_{\alpha B}|f|^2\, dx\Big\}^{1/2},\\
\Big\{\frac{1}{|B|}\int_B |h|^p\, dx \Big\}^{1/p}\leq N 
\Big\{\frac{1}{|\alpha B|}\int_{\alpha B}|f|^2\, dx\Big\}^{1/2}.
\end{align}
Then, if $2<q<p$ and $0<\varepsilon<\varepsilon_0=\varepsilon_0(n,m,p,q,\alpha,N)$, we have
\begin{align}
\Big\{ \frac{1}{|B|}\int_{B}|f|^q \, dx \Big\}^{1/q}
\leq  C\Big\{ \frac{1}{|\alpha B|}\int_{\alpha B}|f|^2 \, dx \Big\}^{1/2},
\end{align}
for any ball $B$ with $\alpha B\subset E$, where $C$ depends only on $n,m,p,q,\alpha$ and $N$.
\end{theorem}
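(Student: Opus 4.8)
The plan is to prove Theorem~\ref{CP-theorem} by the Calder\'on--Zygmund ``good-$\lambda$'' argument of \cite{Caffarelli-Peral}, localized to a single ball. Fix a ball $B_0=B(x_0,r_0)$ with $\alpha B_0\subset E$; we may assume $\int_{\alpha B_0}|f|^2\,dx>0$, and after dividing $f$ by the positive constant $\big(\frac{1}{|\alpha B_0|}\int_{\alpha B_0}|f|^2\,dx\big)^{1/2}$ we reduce to the normalized situation
\[
\frac{1}{|\alpha B_0|}\int_{\alpha B_0}|f|^2\,dx=1,
\]
in which it suffices to show $\frac{1}{|B_0|}\int_{B_0}|f|^q\,dx\le C$ with $C=C(n,m,p,q,\alpha,N)$. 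Set $F=|f|^2\chi_{\alpha B_0}$ and let $\mathcal{M}$ denote the Hardy--Littlewood maximal operator. By the Lebesgue differentiation theorem $|f(x)|^2\le\mathcal{M}F(x)$ for a.e.\ $x\in B_0$, so $\int_{B_0}|f|^q\le\int_{B_0}(\mathcal{M}F)^{q/2}$, and the task reduces to estimating the measures of the level sets $E_\lambda:=\{x\in B_0:\mathcal{M}F(x)>\lambda\}$.

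The core of the argument will be a good-$\lambda$ inequality: there exist $\lambda_0=\lambda_0(n,\alpha)\ge1$ and a dimensional constant $C_1$ such that for every $A>1$ and every $\lambda\ge\lambda_0$,
\[
|E_{A\lambda}|\le C_1\Big(\frac{\varepsilon^2}{A}+\frac{N^p}{A^{p/2}}\Big)|E_\lambda|.
\]
To establish this I would first run a Calder\'on--Zygmund (Vitali) decomposition: since $\int F\,dx=|\alpha B_0|$, for $\lambda\ge\lambda_0$ the set $E_\lambda$ is covered up to a null set by pairwise disjoint cubes $Q_i\subset E_\lambda$ with $F$-average between $\lambda$ and $2^n\lambda$, each of whose dyadic ancestors has $F$-average at most $\lambda$; moreover $|Q_i|<|\alpha B_0|/\lambda$, so for $\lambda_0$ large (depending on $n,\alpha$) every $Q_i$ is small enough that the concentric ball $B_i$ of radius a fixed multiple of $\operatorname{diam}(Q_i)$ satisfies $Q_i\subset B_i$ and $\alpha B_i\subset\alpha B_0\subset E$, and $\alpha B_i$ is contained in a bounded number of ancestors of $Q_i$, whence $\frac{1}{|\alpha B_i|}\int_{\alpha B_i}|f|^2\,dx\le C(n,\alpha)\lambda$. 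Applying the hypothesis on $B_i$ gives $h_i=h_{B_i}\in L^p(B_i)$ with
\[
\frac{1}{|B_i|}\int_{B_i}|f-h_i|^2\,dx\le C\varepsilon^2\lambda,\qquad
\frac{1}{|B_i|}\int_{B_i}|h_i|^p\,dx\le CN^p\lambda^{p/2}.
\]
A standard localization shows $\mathcal{M}F(x)\le\max\big(C_2\lambda,\mathcal{M}(|f|^2\chi_{B_i})(x)\big)$ for $x\in Q_i$ (averages over balls meeting $Q_i$ but not contained in $B_i$ are $\le C_2\lambda$ by the ancestor bound), so for $A>C_2$ the set $\{x\in Q_i:\mathcal{M}F(x)>A\lambda\}$ is contained in $\{\mathcal{M}(|f-h_i|^2\chi_{B_i})>A\lambda/4\}\cup\{\mathcal{M}(|h_i|^2\chi_{B_i})>A\lambda/4\}$. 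The weak-$(1,1)$ bound for $\mathcal{M}$ applied to $|f-h_i|^2\in L^1(B_i)$ controls the first set by $C\varepsilon^2A^{-1}|Q_i|$, and the weak-$(p/2,p/2)$ bound for $\mathcal{M}$ (here $p/2>1$) applied to $|h_i|^2\in L^{p/2}(B_i)$ controls the second by $CN^pA^{-p/2}|Q_i|$; summing over the disjoint $Q_i$ with $\sum_i|Q_i|\le|E_\lambda|$ yields the displayed good-$\lambda$ inequality.

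Granting the good-$\lambda$ inequality, the constants are then chosen in the usual order, and it is here that the hypothesis $q<p$ is used decisively. First pick $A=A(n,m,p,q,N)$ so large that $C_1N^pA^{(q-p)/2}\le\frac14$ (possible since $q-p<0$), then pick $\varepsilon_0=\varepsilon_0(n,m,p,q,\alpha,N)$ so small that $C_1\varepsilon^2A^{(q-2)/2}\le\frac14$ whenever $0<\varepsilon\le\varepsilon_0$; this gives $|E_{A\lambda}|\le\frac12A^{-q/2}|E_\lambda|$ for all $\lambda\ge\lambda_0$, hence by iteration $|E_{A^k\lambda_0}|\le2^{-k}A^{-kq/2}|B_0|$ for $k\ge0$. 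Integrating the distribution function,
\[
\int_{B_0}(\mathcal{M}F)^{q/2}\,dx=\frac q2\int_0^\infty\lambda^{\frac q2-1}|E_\lambda|\,d\lambda
\le\lambda_0^{q/2}|B_0|+C\sum_{k\ge0}(A^{k+1}\lambda_0)^{q/2}\,2^{-k}A^{-kq/2}|B_0|\le C|B_0|,
\]
the series converging with ratio $\frac12$. Dividing by $|B_0|$ and recalling $|f|^q\le(\mathcal{M}F)^{q/2}$ a.e.\ on $B_0$ gives $\frac{1}{|B_0|}\int_{B_0}|f|^q\,dx\le C$, which is the assertion once the normalization is undone.

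The main obstacle I anticipate is the geometric bookkeeping inside the good-$\lambda$ step: one must take $\lambda_0$ large (depending on $n$ and $\alpha$) so that the stopped cubes $Q_i$ are small enough for their $\alpha$-dilates to stay inside $E$, and one must check that the $L^2$-average of $f$ over $\alpha B_i$ is genuinely comparable to the stopping level $\lambda$, which rests on the fact that in the stopping-time construction \emph{all} dyadic ancestors of $Q_i$ --- not merely its parent --- have $F$-average at most $\lambda$. Everything else is the routine maximal-function and iteration scheme; see \cite[p.~192]{Shen-2005} for the analogous details.
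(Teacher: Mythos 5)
Note first that the paper does not actually prove Theorem~\ref{CP-theorem}: it only states it, with the remark that it appears in \cite[p.191]{Shen-2005} and was proved ``essentially in \cite{Caffarelli-Peral}.'' Your proposal reconstructs precisely the Caffarelli--Peral good-$\lambda$ argument to which the paper defers, and the skeleton is sound: normalize so that the $L^2$ average of $f$ over $\alpha B_0$ is $1$, set $F=|f|^2\chi_{\alpha B_0}$, run a stopping-time decomposition of $\{\mathcal M F>\lambda\}$ at levels $\lambda\ge\lambda_0$, invoke the hypothesis on a small ball $B_i\supset Q_i$ (the ancestor bound gives $\dashint_{\alpha B_i}|f|^2\lesssim\lambda$), split $|f|^2\le 2|f-h_i|^2+2|h_i|^2$, control the first piece by the weak-$(1,1)$ maximal bound and the second by the weak-$(p/2,p/2)$ bound, then choose $A$ large using $q<p$, choose $\varepsilon$ small, and sum the geometric series from the iterated good-$\lambda$ inequality. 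The choice of constants $A^{(q-p)/2}$, $A^{(q-2)/2}$ and the final integration of the distribution function are all correct.

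One assertion is stated imprecisely and should not go to print as written: you claim the stopping cubes satisfy $Q_i\subset E_\lambda=\{x\in B_0:\mathcal M F>\lambda\}$ and hence $\sum_i|Q_i|\le|E_\lambda|$. The Calder\'on--Zygmund cubes are the maximal dyadic cubes with $\dashint_{Q_i}F>\lambda$; their union is $\{\mathcal M_d F>\lambda\}$, where $\mathcal M_d$ is the \emph{dyadic} maximal function, and this set is in general neither contained in $B_0$ nor in $\{\mathcal M F>\lambda\}$, so a cube that touches $B_0$ may protrude beyond it and $\sum_i|Q_i|$ is not dominated by $|E_\lambda|$. To close the iteration cleanly one should either run the entire good-$\lambda$ argument with $\mathcal M_d$ in place of $\mathcal M$ (using $|f|^2\le\mathcal M_d F$ a.e.\ in the final step as well), or use the standard comparison $\{\mathcal M_d F>\lambda\}\subset\{\mathcal M F>c_n\lambda\}$ and let the level sets live in a fixed dilate of $B_0$, which only costs a factor $2^n$ at the end. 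You are clearly aware of the analogous issue for keeping $\alpha B_i\subset E$, and these are routine adjustments that do not change the structure of the proof, but the inequality $\sum_i|Q_i|\le|E_\lambda|$ as you wrote it is not literally correct and needs one of these fixes.
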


We remark that Theorem \ref{CP-theorem}, which was stated in \cite[p.191]{Shen-2005}, 
was proved essentially in \cite{Caffarelli-Peral}.

\section{Proof of Theorem \ref{main-theorem-2}}
\setcounter{equation}{0}

In this section we give the proof of Theorem \ref{main-theorem-2}.
In view of Remark \ref{duality-remark} and Theorem \ref{main-theorem-3}, it suffices to show
that if $u_\varep \in  W^{1,2}(3B\cap\Omega)$ is a weak solution to
$\mathcal{L}_\varep (u_\varep)=0$ in $3B\cap \Omega$ and $u_\varep=0$
on $3B\cap\partial\Omega$, where $B=B(x_0,r)$ with $x_0\in \overline{\Omega}$ and
$0<r<cr_0$, then $|\nabla u_\varep|\in L^{p_n}(B\cap\Omega)$ and
estimate (\ref{reverse-Holder-inequality}) holds for 
$q=p_n=\frac{2n}{n-1}$ with a constant $N$
independent of $\varepsilon$. By the interior estimate (\ref{interior-estimate}) we may assume that
$B=B(Q,r)$ for some $Q\in \partial\Omega$.
Furthermore, by a change of coordinates, it is enough to show that
\begin{equation}\label{4.1}
\left\{\frac{1}{r^n}\int_{D_r} |\nabla u_\varep|^{p_n}\, dx\right\}^{1/p_n}
\le C
\left\{\frac{1}{r^n}\int_{D_{3r}} |\nabla u_\varep|^2\, dx\right\}^{1/2},
\end{equation}
whenever $u_\varep\in W^{1,2}(D_{3r})$ is a weak solution to
$\mathcal{L}_\varep (u_\varep)=0$ in $D_{3r}$ and $u_\varep =0$ on $\Delta_{3r}$.

Throughout this section we assume that $A\in \Lambda(\mu, \lambda, \tau)$ and $A^*=A$.

\begin{lemma}\label{iteration-lemma}
Let $u_\varep\in W^{1,2}(D_{3r})$ be a weak solution to $\mathcal{L}_\varep (u_\varep)=0$
in $D_{3r}$ and $u_\varep =0$ on $\Delta_{3r}$.
Suppose that for some $q=q_1>2$,
\begin{equation}\label{4.1.1}
\left\{\frac{1}{\rho^n}\int_{D_\rho} |\nabla u_\varep|^{q}\, dx \right\}^{1/q}
\le C_q
\left\{\frac{1}{\rho^n}\int_{D_{3\rho}} |\nabla u_\varep|^2\, dx \right\}^{1/2},
\end{equation}
for all $0<\rho\le r$. 
Then there exists $\delta>0$, depending only on $n$, $m$, $\mu$, $\lambda$,
$\tau$, $C_{q_1}$ and $M$, such that
the estimate (\ref{4.1.1}) holds for $2<q<2+\delta+\frac{q_1}{n}$
and $C_q=C(n,m,\mu,\lambda, \tau, q, C_{q_1}, M)$.
\end{lemma}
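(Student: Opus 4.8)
The plan is to prove the iteration Lemma~\ref{iteration-lemma} by combining the non-tangential maximal function bounds for the $L^{q_0}$ Dirichlet and regularity problems (valid since $A\in\Lambda(\mu,\lambda,\tau)$ and $A^*=A$) with the pointwise interior estimate coming from (\ref{interior-estimate}). Write $\rho(x)=\operatorname{dist}(x,\partial\Omega)$ (equivalently $\operatorname{dist}$ to $\Delta_{3r}$ inside $D_{3r}$). The interior estimate (\ref{interior-estimate}), applied on a ball of radius comparable to $\rho(x)$ centered at $x$ and then iterated against (\ref{4.1.1}) with exponent $q_1$ on the annulus where it is valid, yields
\begin{equation}\label{plan-ptwise}
|\nabla u_\varep (x)| \le C\,[\rho(x)]^{-\alpha}
\left\{\frac{1}{r^n}\int_{D_{3r}}|\nabla u_\varep|^2\,dx\right\}^{1/2},
\qquad x\in D_r,
\end{equation}
with $\alpha=n/q_1$ (improving on the trivial $\alpha=n/2$ that (\ref{interior-estimate-1}) gives). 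First I would make this step precise by a covering/rescaling argument: cover $D_r$ by boundary balls and apply (\ref{4.1.1}) at scale $\rho\sim \rho(x)$ to convert the $L^{q_1}$ average on a small ball near $x$ into an $L^2$ average on a larger ball, then absorb the resulting volume factors.

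Next, following the strategy outlined in the introduction for Theorem~\ref{main-theorem-2}, I would split
\[
\int_{D_r}|\nabla u_\varep|^{q}\,dx
=\int_{D_r}|\nabla u_\varep|^{q_0}\cdot|\nabla u_\varep|^{q-q_0}\,dx,
\]
where $q_0>2$ is the exponent from \cite{Kenig-Shen-2}. The factor $|\nabla u_\varep|^{q-q_0}$ is controlled pointwise by (\ref{plan-ptwise}) (here one needs $q\ge q_0$; the range $2<q<q_0$ is covered directly by (\ref{4.1.1}) with $q_1$ via interpolation and the hypothesis, so one may assume $q\ge q_0$). For the factor $|\nabla u_\varep|^{q_0}$, I would use the nontangential maximal function estimate for the regularity problem: on a slightly larger Lipschitz subdomain $D_{tr}$, $t\in(1,2)$, one has $\int_{\partial D_{tr}}|(\nabla u_\varep)^*|^{q_0}\,d\sigma \le C\int_{\partial D_{tr}}|\nabla_{tan}u_\varep|^{q_0}\,d\sigma$, and since $u_\varep=0$ on $\Delta_{3r}$ the tangential gradient vanishes there, so the boundary integral is supported on $\partial D_{tr}\setminus\Delta_{3r}$ where one converts back to an interior integral using Cacciopoli and (\ref{interior-estimate-1}); integrating over $t\in(1,2)$ then bounds it by $C r^{-1}\int_{D_{2r}}|\nabla u_\varep|^{q_0}$, and a further application of (\ref{4.1.1}) (self-improved past $q_1$) with $q_0<q_1+\delta'$ bounds this in terms of the $L^2$ average. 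Collecting the two factors, tracking the powers of $\rho(x)$, and checking that the $\rho$-power $\alpha(q-q_0)$ is integrable over $D_r$ precisely when $q-q_0<n-\alpha\cdot\text{(something)}$ — which after simplification gives $q< q_0+\frac{2}{n}$ when starting from the Dirichlet/regularity exponent, but starting instead from (\ref{4.1.1}) with the current $q_1$ in place of $q_0$ in the nontangential estimate via a bootstrap — delivers (\ref{4.1.1}) for $q<2+\delta+\frac{q_1}{n}$ after including the self-improvement gap $\delta$.

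More carefully, the gain of $\frac{q_1}{n}$ (rather than $\frac{q_0}{n}$) comes from feeding the improved pointwise bound (\ref{plan-ptwise}) — whose exponent $\alpha=n/q_1$ is dictated by the hypothesis (\ref{4.1.1}) — into the splitting argument. The integrability condition for $\int_{D_r}[\rho(x)]^{-\alpha(q-q_0)}\,d(\text{boundary measure})$-type integrals near a Lipschitz boundary is $\alpha(q-q_0)<1$, i.e. $q-q_0<q_1/n$; combined with the fact that the ``$q_0$'' in this argument can itself be taken to be any exponent $<q_1$ for which the weak reverse H\"older (and hence the nontangential regularity bound via the square function estimates, as in \cite{Dahlberg-Kenig-Pipher-Verchota,Gao-1991}) holds, and with the $\delta$-room from self-improvement of (\ref{4.1.1}), one obtains the stated range $2<q<2+\delta+\frac{q_1}{n}$. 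The constants depend only on $n,m,\mu,\lambda,\tau,M$ and $C_{q_1}$ since every step uses only these.

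The main obstacle I expect is making the bootstrap between (\ref{plan-ptwise}) and the splitting fully rigorous while keeping the constant independent of $\varep$: the pointwise bound (\ref{plan-ptwise}) is applied at scale $\rho(x)$, which may be much smaller than $\varep$, so one must be sure the interior estimate (\ref{interior-estimate}) — whose constant is $\varep$-independent precisely because of the $C^{1,\alpha}$-scale Avellaneda--Lin estimate \cite{AL-1987} — is being invoked at scales where it genuinely holds uniformly, and that the covering argument producing (\ref{plan-ptwise}) does not secretly introduce a dependence on $r/\varep$. The second delicate point is the endpoint bookkeeping of the $\rho$-powers: one must verify that the exponent arithmetic closes exactly at $q=q_0+\frac{q_1}{n}$ modulo $\delta$ rather than at a strictly smaller value, which requires using the \emph{sharp} form of (\ref{plan-ptwise}) (with $\alpha=n/q_1$, not $n/2$) — this is exactly why the lemma is phrased as an iteration statement rather than proved in one shot.
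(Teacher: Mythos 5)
Your plan matches the paper's proof of Lemma~\ref{iteration-lemma} essentially step for step: the pointwise decay estimate with exponent $n/q_1$ derived from (\ref{interior-estimate}) and the hypothesis (\ref{4.1.1}), the split $|\nabla u_\varep|^{q_0}\cdot |\nabla u_\varep|^{q-q_0}$ with $q_0=2+\delta$ the exponent from the nontangential regularity estimate of \cite{Kenig-Shen-2}, and the Fubini-integrability condition $n(q-q_0)/q_1<1$ giving the stated range $q<2+\delta+q_1/n$. Two small corrections: your displayed pointwise bound should carry the dimensionless factor $(r/\rho(x))^{n/q_1}$ rather than $[\rho(x)]^{-n/q_1}$, and the remark that $q_0$ ``can itself be taken to be any exponent $<q_1$ for which the weak reverse H\"older holds'' overstates what is available --- the nontangential estimate of \cite{Kenig-Shen-2} for periodic operators is only known for $p$ near $2$, and the square-function bounds of \cite{Gao-1991,Dahlberg-Kenig-Pipher-Verchota} apply to the constant-coefficient systems of Section~3, not to $\mathcal{L}_\varep$ --- though neither affects your final range since you in fact use $q_0=2+\delta$.
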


\begin{proof}
Let $d(x)=|x_n-\psi(x^\prime)|$ for $x=(x^\prime, x_n)$.
It follows from (\ref{interior-estimate}) and (\ref{4.1.1}) with $q=q_1$ that
\begin{equation}\label{4.1.2}
\aligned
|\nabla u_\varep (x)| & \le C \left\{ \frac{1}{[d(x)]^n}
\int_{B(x, cd(x))} |\nabla u_\varep (y)|^{q_1}\, dy \right\}^{1/q_1}\\
& \le C \left\{ \frac{\rho}{d(x)}\right\}^{\frac{n}{q_1}}
\left\{ \frac{1}{\rho^n} 
\int_{D_{3\rho}} |\nabla u_\varep (y)|^2\, dy\right\}^{1/2},
\endaligned
\end{equation}
for any $x\in D_{2\rho}$.
Since $A\in \Lambda(\mu, \lambda, \tau)$ and $A^*=A$, it follows from \cite{Kenig-Shen-2} that 
there exists $\delta>0$ such that the unique weak solution to the Dirichlet problem 
$\mathcal{L}_\varep (u_\varep)=0$ with boundary data in $W^{1,2+\delta}(\partial\Omega)$ 
in a Lipschitz domain $\Omega$ with connected boundary
satisfies
$\|(\nabla u_\varep)^*\|_{L^{2+\delta}(\partial\Omega)}
\le C \| \nabla_{tan} u_\varep\|_{L^{2+\delta}(\partial\Omega)}$.
Here $(\nabla u_\varep)^*$ denotes the nontangential maximal function of $\nabla u_\varep$.
By applying this estimate to $u_\varep$ on the Lipschitz domain $D_{t\rho}$ for
$t\in (3/2,2)$ and using an integration argument, one may obtain
\begin{equation}\label{4.1.3}
\int_{\Delta_\rho}
|(\nabla u_\varep)^*_\rho|^{2+\delta}\, d\sigma
\le \frac{C}{\rho} \int_{D_{2\rho}} |\nabla u_\varep|^{2+\delta}\, dx,
\end{equation}
where
\begin{equation}\label{4.1.4}
(\nabla u_\varep)^*_\rho (x^\prime, \psi(x^\prime))
=\sup \big\{ |\nabla u_\varep (x^\prime, x_n)|: \
(x^\prime, x_n )\in D_\rho \big\}.
\end{equation}
Let $q_0=2+\delta$. Note that, if $\delta$ is sufficiently small,
\begin{equation}\label{4.1.4.1}
\left\{\frac{1}{\rho^n}\int_{D_{2\rho}} |\nabla u_\varep|^{q_0}\, dx \right\}^{1/q_0}
\le C
\left\{\frac{1}{\rho^n}\int_{D_{3\rho}} |\nabla u_\varep|^2\, dx \right\}^{1/2}
\end{equation}
(see Remark \ref{Cacciopoli-remark}).
 Hence,
\begin{equation}\label{4.1.4.2}
\left\{ \frac{1}{\rho^{n-1}}\int_{\Delta_\rho}
|(\nabla u_\varep)^*_\rho|^{q_0}\, d\sigma\right\}^{1/q_0}
\le C \left\{ \frac{1}{\rho^n} \int_{D_{3\rho}} |\nabla u_\varep|^{2}\, dx\right\}^{1/2}.
\end{equation}

Now, using estimates (\ref{4.1.2}) and (\ref{4.1.4.2}), we see that
\begin{equation}\label{4.1.5}
\aligned
&\left\{ \frac{1}{\rho^n}\int_{D_\rho} |\nabla u_\varep|^{q}\, dx \right\}^{1/q}
=\left\{ \frac{1}{\rho^n}\int_{D_\rho} |\nabla u_\varep|^{q_0} |\nabla u_\varep|^{q-q_0}
\, dx \right\}^{1/q}\\
&\le C \left\{ \frac{1}{\rho^{n-1}}\int_{\Delta_\rho}
|(\nabla u_\varep)^*_\rho|^{q_0}\, d\sigma
\cdot \frac{1}{\rho}
\int_0^{c\rho} \left(\frac{\rho}{t}\right)^{\frac{n(q-q_0)}{q_1}}\, dt\right\}^{1/q}
\cdot \left\{\frac{1}{\rho^n}
\int_{D_{3\rho}} |\nabla u_\varep|^2\, dx\right\}^{\frac{q-q_0}{2q}}\\
&\le
C \left\{\frac{1}{\rho^n}
\int_{D_{3\rho}} |\nabla u_\varep|^2\, dx \right\}^{1/2},\nonumber
\endaligned
\end{equation}
if $0<n(q-q_0)<q_1$. Note that $n(q-q_0)<q_1$ is equivalent to
$q<2+\delta+\frac{q_1}{n}$.
This finishes the proof.
\end{proof}

\noindent{\bf Proof of Theorem \ref{main-theorem-2}. }
Let $u_\varep \in W^{1,2}(D_{3r})$ be a weak solution to
$\mathcal{L}_\varep (u_\varep)=0$ in $D_{3r}$ and $u_\varep =0$ on $\Delta_{3r}$.
It follows from the Cacciopoli's inequality that the weak reverse H\"older inequality
(\ref{4.1.1}) always holds for some $q_1>2$ under the ellipticity condition (\ref{quadratic-estimate})
(see Remark \ref{Cacciopoli-remark};
smoothness and periodicity conditions are not needed). Suppose that $q_1<\frac{2n}{n-1}$.
By Lemma \ref{iteration-lemma} estimate (\ref{4.1.1}) holds for some 
$q=q_2>2+\frac{\delta}{2}+\frac{q_1}{n}>q_1$. If $q_2<\frac{2n}{n-1}$, then the same argument would give
(\ref{4.1.1}) for $q=q_3>2+\frac{\delta}{2}+\frac{q_2}{n}>q_2$. Continuing this process,
we claim that there exists some $j$ such that estimate (\ref{4.1.1}) 
holds for some $q=q_j>\frac{2n}{n-1}$. For otherwise we would have a bounded
increasing sequence $\{q_j\}$
such that $q_{j+1}>2+\frac{\delta}{2}+\frac{q_j}{n}>q_j$.
Let $q$ be the limit of $\{q_j\}$. Then $q\ge 2+\frac{\delta}{2}+\frac{q}{n}$, which implies
that $q>p_n=\frac{2n}{n-1}$. It follows that $q_j>p_n$ if $j$ is sufficiently large.
Thus (\ref{4.1.1}) must hold for some $q=q_j>p_n$.
This completes the proof.
\qed

\section{ Proof of Theorem \ref{main-theorem-1}}
\setcounter{equation}{0}

Let $D_r$ and $\Delta_r$ be defined as in (\ref{definition-of-Delta}) with
$\|\nabla\psi\|_\infty\le M$.
In view of Remark \ref{duality-remark} and Theorem \ref{main-theorem-3}, as in the case of
Theorem \ref{main-theorem-2}, Theorem \ref{main-theorem-1}
is a consequence of the following.

\begin{theorem}\label{theorem-5.1}
Let $\mathcal{L}_\varep =-\text{\rm div} (A(x/\varep)\nabla)$
with $A\in \mathcal{M}(\mu, \lambda, \tau)$.
Suppose that $u_\varep \in W^{1,2}(D_{3r})$, $\mathcal{L}_\varep (u_\varep)=0$
in $D_{3r}$ and $u_\varep =0$ on $\Delta_{3r}$.
Then the estimate (\ref{4.1.1}) holds for $q=p_n=\frac{2n}{n-1}$
with a constant $C$ 
depending only $n$, $m$, $\mu$, $\lambda$, $\tau$ and $M$.
\end{theorem}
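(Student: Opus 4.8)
The plan is to follow the compactness scheme of \cite{Shen-2008}, reducing the boundary weak reverse H\"older inequality for $|\nabla u_\varep|$ to a decay estimate for an integral of $|u_\varep|^{p_n}$ (not $|\nabla u_\varep|^{p_n}$) on a boundary layer, uniformly in $\varep$. The key point is that, unlike in the setting of Theorem \ref{main-theorem-2}, we do not have nontangential maximal function estimates available for systems of elasticity in Lipschitz domains, so we must work with the function $u_\varep$ itself rather than its gradient. First I would normalize: by scaling $x\mapsto x/r$ we may take $r=1$, which replaces $\mathcal{L}_\varep$ by $\mathcal{L}_{\varep/r}$; since the final constant must be independent of $\varep$, one treats separately the ``large scale'' regime $\varep\ge c$, where interior and boundary Schauder-type estimates for the operator with (after rescaling) smooth coefficients apply directly, and the ``small scale'' regime $0<\varep<c$, where homogenization is effective. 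The heart of the argument is the small-scale case.

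The central step is an \emph{approximation lemma in the style of Lemma \ref{approx-lemma}}: for $u_\varep$ a solution of $\mathcal{L}_\varep(u_\varep)=0$ in $D_{3}$ vanishing on $\Delta_{3}$, and for any $\theta\in(0,1/4)$, there is $\varep_0=\varep_0(\theta)>0$ so that if $0<\varep<\varep_0$ then one can find a solution $v$ of the \emph{homogenized} system $\mathcal{L}_0(v)=0$ in $D_2$, $v=0$ on $\Delta_2$, with
\begin{equation*}
\Big(\frac{1}{\theta^n}\int_{D_\theta}|u_\varep-v|^{p_n}\,dx\Big)^{1/p_n}
\le \omega(\theta,\varep)\Big(\int_{D_2}|u_\varep|^{p_n}\,dx\Big)^{1/p_n},
\end{equation*}
where $\omega(\theta,\varep)\to 0$ as $\varep\to0$ for each fixed $\theta$. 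This is proved by contradiction and compactness: if it fails, a normalized sequence $u_{\varep_k}$ with $\varep_k\to0$ converges (weakly in $W^{1,2}$, strongly in $L^{p_n}$ on compact subsets, using Caccioppoli \eqref{Cacciopoli-inequality}, the self-improving inequality \eqref{reverse-1}, and Rellich) to a solution of the homogenized system with the same homogeneous boundary condition, via the standard div-curl / oscillating test function machinery of homogenization, and the limit contradicts the strict inequality. Here the symmetric-matrix ellipticity \eqref{e1.3} together with the first Korn inequality guarantees the coercivity needed for the homogenized operator and the uniform $W^{1,2}$ bounds.

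Given the approximation lemma, one invokes the boundary regularity of the constant-coefficient homogenized system: since $\mathcal{L}_0$ has constant coefficients satisfying the Legendre–Hadamard condition and $\mathcal{L}_0^*=\mathcal{L}_0$, Lemma \ref{constant-coefficient-lemma} (or rather its consequence, via the $L^2$ regularity and square-function estimates in \cite{Gao-1991,Dahlberg-Kenig-Pipher-Verchota}) yields an excess-decay estimate: $\big(\theta^{-n}\int_{D_\theta}|v|^{p_n}\big)^{1/p_n}\le C_0\,\theta\,\big(\int_{D_1}|v|^{p_n}\big)^{1/p_n}$ for a fixed small $\theta$. Combining this with the approximation bound and a triangle inequality gives, after choosing $\theta$ then $\varep_0$, the one-step decay
\begin{equation*}
\Big(\frac{1}{\theta^n}\int_{D_\theta}|u_\varep|^{p_n}\,dx\Big)^{1/p_n}
\le \tfrac12\,\Big(\int_{D_1}|u_\varep|^{p_n}\,dx\Big)^{1/p_n},
\end{equation*}
valid uniformly for $0<\varep<\varep_0$. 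One then iterates this down the dyadic scales $\theta^k$ \emph{as long as} $\theta^k\ge\varep$, and handles the remaining finitely many scales below $\varep$ by rescaling to $\mathcal{L}_1$ and applying the smooth-coefficient boundary estimate. A now-standard real-variable lemma (Campanato-type characterization) converts the resulting power decay of $\fint_{D_\rho}|u_\varep|^{p_n}$ into the reverse H\"older bound
\begin{equation*}
\Big(\frac{1}{r^n}\int_{D_r}|\nabla u_\varep|^{p_n}\,dx\Big)^{1/p_n}
\le C\Big(\frac{1}{r^n}\int_{D_{2r}}|u_\varep|^{2}\,dx\Big)^{1/2}
\le C\Big(\frac{1}{r^n}\int_{D_{3r}}|\nabla u_\varep|^{2}\,dx\Big)^{1/2},
\end{equation*}
the last step using Caccioppoli and Poincar\'e (recall $u_\varep=0$ on $\Delta_{3r}$). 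This is \eqref{4.1.1} with $q=p_n$.

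The main obstacle I anticipate is the approximation lemma, specifically keeping all constants and the threshold $\varep_0(\theta)$ free of $\varep$ and extracting the homogenized limit with the homogeneous Dirichlet condition on the rough boundary $\Delta$: one must ensure the correctors produce no boundary-layer obstruction, which is why the argument is run with $|u_\varep|$ rather than $|\nabla u_\varep|$, and why the compactness is taken in $L^{p_n}_{loc}$ where convergence is strong. A secondary technical point is the uniform self-improving exponent in \eqref{reverse-1} and the treatment of the regime $\varep\ge c$, both of which are routine once the coefficients are smooth after rescaling.
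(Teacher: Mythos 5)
Your proposal captures the right high-level strategy and matches the paper's own route: a compactness argument in the style of \cite{Shen-2008}, reducing the boundary weak reverse H\"older inequality for $|\nabla u_\varep|$ to a decay estimate for $\fint |u_\varep|^{p_n}$ on boundary layers, which is established by contradiction and homogenization. Your ``approximation lemma'' is a repackaging of what the paper proves in its Lemma~\ref{lemma-5.4} (the paper states the decay for $u_\varep$ directly, with the comparison to the homogenized limit appearing inside the contradiction argument, rather than as a separate statement); both formulations lead to the same one-step decay after combining with the constant-coefficient result (the paper's Lemma~\ref{lemma-5.3}), and both then iterate dyadically, rescaling to $\mathcal{L}_1$ at the small scales below $\varep$.

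However, there is a genuine gap in your closing step. You claim that ``a now-standard real-variable lemma (Campanato-type characterization) converts the resulting power decay of $\fint_{D_\rho}|u_\varep|^{p_n}$ into the reverse H\"older bound'' for $\nabla u_\varep$. No such Campanato-type lemma does this: decay of $\fint_{D_\rho}|u|^{p_n}$ gives Morrey/H\"older information about $u$ itself, not an $L^{p_n}$ bound on $\nabla u$. The actual bridge in the paper is a two-step argument you have not reproduced: first, summing the decay estimate over dyadic scales (together with the rescaled $\mathcal{L}_1$ estimate at the scales below $\varep/\varep_0$) yields a \emph{boundary Hardy inequality}, $\int_{D_1}|u_\varep(x',\psi(x')+s)/s|^{p_n}\,dx'\,ds\lesssim\int_{D_3}|u_\varep|^{p_n}$; second, and crucially, the paper's Lemma~\ref{lemma-5.2} converts $u_\varep/s$ into $\nabla u_\varep$, i.e.\ $\int_{D_r}|\nabla u_\varep|^{p_n}\lesssim\int_{D_{2r}}|u_\varep/s|^{p_n}$. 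Lemma~\ref{lemma-5.2} is not a soft real-variable fact: it rests on the \emph{uniform interior Lipschitz estimate} (\ref{interior-estimate}) from \cite{AL-1987}, applied in Whitney balls $B(x,cd(x))$ to bound $|\nabla u_\varep(x)|$ pointwise by $\sup|u_\varep|/d(x)$ nearby. This is where the smoothness hypothesis $A\in\mathcal{M}(\mu,\lambda,\tau)$ is used in an essential way, and your sketch never invokes it. Without this step, you cannot pass from the $L^{p_n}$ boundary-layer decay of $u_\varep$ to the $L^{p_n}$ bound on $\nabla u_\varep$, so the argument as written does not close.

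Two smaller remarks: your large-$\varep$ regime should be handled by the VMO reverse-H\"older estimate (Theorem~\ref{boundary-Holder-theorem}), which is what the paper uses and what is actually available; ``boundary Schauder estimates'' give more than is needed and would require uniform H\"older control of the rescaled coefficients that is not relevant here. And in the final chain, the middle inequality $\big(\fint_{D_r}|\nabla u_\varep|^{p_n}\big)^{1/p_n}\le C\big(\fint_{D_{2r}}|u_\varep|^2\big)^{1/2}$ is not Caccioppoli; it should read $\le C\big(\fint_{D_{3r}}|u_\varep|^{p_n}\big)^{1/p_n}$ (by Lemma~\ref{lemma-5.2} plus the Hardy estimate), followed by Sobolev to reach $\big(\fint_{D_{3r}}|\nabla u_\varep|^2\big)^{1/2}$.
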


Since the nontangential maximal function estimates used in Lemma \ref{iteration-lemma}
are not available under the assumption $A\in \mathcal{M}(\mu, \lambda, \tau)$,
the proof of Theorem \ref{theorem-5.1} relies on
a compactness method motivated by \cite{AL-1987}.
In \cite{Shen-2008} the same approach was used to establish Theorem \ref{main-theorem-2}
in the case $m=1$. 

Throughout the rest of this section we will assume that
$A\in \mathcal{M}(\mu, \lambda, \tau)$.

\begin{lemma}\label{lemma-5.2}
Let $u_\varep\in W^{1,2}(D_{3r})$, $\mathcal{L}_\varep (u_\varep)=0$ in $D_{3r}$
and $u_\varep=0$ on $\Delta_{3r}$. Then for any $p>1$,
\begin{equation}\label{5.2.1}
\int_{0}^{cr}\!\!\!\int_{|x'|<r}|\nabla u_\varepsilon(x',\psi(x')+s)|^p \, dx'
ds\leq C_p \int_{0}^{2cr}\!\!\!\int_{|x'|<2r}\Big|\frac{u_\varepsilon(x',\psi(x')+s)}{s}\Big|^p \, 
dx'ds,
\end{equation}
where $c=(M+10n)$ and $C_p>0$ depends only on $n,p,\mu,\tau,\lambda$ and $M$.
\end{lemma}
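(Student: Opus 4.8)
The plan is to establish (\ref{5.2.1}) by combining the interior Lipschitz estimate
(\ref{interior-estimate}) for $\mathcal{L}_\varep$ with a Caccioppoli-type argument carried
out slice-by-slice in the $x_n$-direction, with carefully chosen cut-offs that respect the
distance to the boundary $\Delta_{3r}$. Write $d(x)=|x_n-\psi(x')|$ as in Lemma \ref{iteration-lemma}.
For $x=(x',\psi(x')+s)\in D_r$ with $0<s<cr$, the point $x$ lies at distance comparable to $s$
from $\Delta_{3r}$, so the interior estimate applied on $B(x,c_0 s)\subset D_{3r}$ gives the
pointwise bound
\begin{equation}\label{5.2.plan.1}
|\nabla u_\varep(x)|\le C\left\{\frac{1}{s^n}\int_{B(x,c_0 s)}|\nabla u_\varep(y)|^2\,dy\right\}^{1/2}.
\end{equation}
First I would use (\ref{5.2.plan.1}) together with a dyadic decomposition of $D_r$ into shells
$\{s\sim 2^{-k}r\}$ to reduce matters to an $L^p$ bound for $\nabla u_\varep$ on each shell in
terms of $u_\varep/s$ on a slightly larger shell.

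The core of the argument is the following local estimate: for a dyadic shell $S$ at height
$\sim t$ (so $d(x)\sim t$ there, $0<t<2cr$), one has the weighted Caccioppoli inequality
\begin{equation}\label{5.2.plan.2}
\left\{\frac{1}{t^n}\int_{S}|\nabla u_\varep|^2\,dx\right\}^{1/2}
\le \frac{C}{t}\left\{\frac{1}{t^n}\int_{\widetilde S}|u_\varep|^2\,dx\right\}^{1/2}
\le C\left\{\frac{1}{t^n}\int_{\widetilde S}\Big|\frac{u_\varep}{d}\Big|^2\,dx\right\}^{1/2},
\end{equation}
where $\widetilde S$ is a fixed dilate of $S$ contained in $D_{3r}$; this is just
(\ref{Cacciopoli-inequality}) applied on balls of radius $\sim t$, which is legitimate because
$\mathcal{L}_\varep(u_\varep)=0$ on $D_{3r}$ and $u_\varep=0$ on $\Delta_{3r}$ (the ellipticity
needed for Caccioppoli follows from the first Korn inequality, exactly as in the reduction
preceding Section 2). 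Feeding (\ref{5.2.plan.2}) back into (\ref{5.2.plan.1}) and raising to the
$p$-th power, I would sum over the dyadic shells. Because the weight $1/d(x)$ is essentially
constant on each shell and the shells have bounded overlap after mild dilation, the sum telescopes
into the right-hand side of (\ref{5.2.1}). For $p\ge 2$ one passes from the $L^2$ average in
(\ref{5.2.plan.2}) to $L^p$ by a further application of the weak reverse Hölder inequality of
Remark \ref{Cacciopoli-remark} (valid for some exponent $>2$, self-improving, and with constants
independent of $\varep$ by (\ref{interior-estimate})); for $1<p<2$ one uses Hölder's inequality in
the other direction, which only helps. A small technical point is the behaviour near $|x'|=r$,
i.e.\ the lateral boundary of $D_r$; here one uses the standard interior estimate (no boundary
condition) on balls whose radius is comparable to $\text{dist}(x,\partial D_{3r})$, and these
contributions are absorbed into the right-hand side over $D_{2r}$ in the usual way, at the cost of
enlarging $\{|x'|<r\}$ to $\{|x'|<2r\}$ and $cr$ to $2cr$.

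I expect the main obstacle to be bookkeeping the geometry: organizing the Whitney-type
decomposition of $D_r$ near $\Delta_{3r}$ so that each Whitney ball on which (\ref{interior-estimate})
or (\ref{Cacciopoli-inequality}) is invoked lies inside $D_{3r}$, has radius comparable to $d(x)$,
and the resulting family has bounded overlap after a fixed dilation — all uniformly in $r$ and,
crucially, with constants independent of $\varep$ (which is automatic since (\ref{interior-estimate})
and Caccioppoli have $\varep$-independent constants). Once the covering is set up, the estimate is a
routine summation. I would also remark that smoothness of $A$ enters only through the interior
Lipschitz bound (\ref{interior-estimate}); the Caccioppoli step uses merely (\ref{e1.3}) and the
Korn inequality, so the constant $C_p$ depends only on $n,p,\mu,\tau,\lambda$ and $M$ as claimed.
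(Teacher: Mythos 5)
Your overall plan matches the proof the paper has in mind: the interior Lipschitz estimate (\ref{interior-estimate}) gives a pointwise bound on $|\nabla u_\varep(x)|$ in terms of an $L^2$ average of $|\nabla u_\varep|$ on a ball of radius $\sim s$, Caccioppoli converts that into an $L^2$ average of $|u_\varep|/s$, and a Whitney-type decomposition of $D_r$ into dyadic shells at height $\sim 2^{-k}r$ plus bounded-overlap bookkeeping finishes the job. For $p\ge 2$ this works exactly as you describe, though the mechanism is not the weak reverse H\"older inequality of Remark~\ref{Cacciopoli-remark} (which only reaches a fixed exponent slightly above $2$): once you have the pointwise bound $|\nabla u_\varep(x)|\le C\big(\fint_{\widetilde S}|u_\varep/d|^2\big)^{1/2}$, raising to the $p$-th power and using Jensen's inequality with the convex function $t\mapsto t^{p/2}$ gives $\big(\fint g^2\big)^{p/2}\le \fint g^p$, which is all you need; the $L^\infty$ estimate (\ref{interior-estimate}) in step (\ref{5.2.plan.1}) is what does the real work.

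The case $1<p<2$ is where there is a genuine gap. Your claim that ``H\"older's inequality in the other direction only helps'' is false. For $p<2$ the map $t\mapsto t^{p/2}$ is concave, so Jensen gives $\big(\fint g^2\big)^{p/2}\ge \fint g^p$, which is the wrong direction: after you integrate the pointwise bound $|\nabla u_\varep(x)|^p\le C\big(\fint_{\widetilde S}|u_\varep/d|^2\big)^{p/2}$ over the shell $S$, you cannot pass to $\fint_{\widetilde S}|u_\varep/d|^p$. The missing ingredient is the interior mean-value (or $L^\infty$-to-$L^p$) property of solutions: since $\mathcal{L}_\varep(u_\varep)=0$ on the ball, $u_\varep$ itself satisfies $\|u_\varep\|_{L^\infty(B)}\le C\big(\fint_{2B}|u_\varep|^2\big)^{1/2}$ uniformly in $\varep$ (Avellaneda--Lin interior estimates), and by the standard absorption iteration this self-improves downward to $\|u_\varep\|_{L^\infty(B)}\le C_p\big(\fint_{2B}|u_\varep|^p\big)^{1/p}$ for any $p>0$. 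Feeding this into the chain $\|\nabla u_\varep\|_{L^\infty(B)}\le C\rho^{-1}\big(\fint_{2B}|u_\varep|^2\big)^{1/2}\le C_p\rho^{-1}\big(\fint_{3B}|u_\varep|^p\big)^{1/p}$, and then using $d\sim\rho$ on $3B$, gives the pointwise bound $|\nabla u_\varep(x)|\le C_p\big(\fint_{3B}|u_\varep/d|^p\big)^{1/p}$ valid for \emph{all} $p>0$, after which the dyadic summation works uniformly. Without this step your argument only proves the lemma for $p\ge 2$ (which, incidentally, is the only case the paper actually uses, since it applies the lemma with $p=p_n=2n/(n-1)>2$; but the lemma as stated claims all $p>1$).
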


\begin{proof} This follows from the interior estimate (\ref{interior-estimate}). 
The proof is similar to that of Lemma 3.2 in \cite{Shen-2008} and thus omitted.
\end{proof}

\begin{lemma}\label{lemma-5.3}
Let $L=-\text{\rm div}(\bar{A}\nabla)$, where $\bar{A}=(a_{ij}^{\alpha\beta})$ with
$1\le i, j\le n$ and $1\le \alpha, \beta\le m$ is a constant matrix satisfying
$\bar{A}^*=\bar{A}$ and the Legendre-Hadamard condition (\ref{Hardamard-condition}).
Suppose that $u_0\in W^{1,2}(D_{3/2})$, $L(u_0)=0$ in $D_{3/2}$ and
$u_0=0$ on $\Delta_{3/2}$.
Then
\begin{equation}\label{5.3.1}
\aligned
\int_0^t \int_{|x^\prime|< 1}
& |u_0(x^\prime, \psi(x^\prime) +s)|^{p_n}\, dx^\prime ds\\
&\le C_0\, t^{p_n+2\sigma}
\int_0^{3/2} \int_{|x^\prime|<\frac32} |u_0 (x^\prime, \psi(x^\prime) +s)|^{p_n}\,
dx^\prime ds
\endaligned
\end{equation}
for any $0<t<1$, where $C_0$ and $\sigma$ are positive constants depending only
on $n, m, \mu$ and $M$.
\end{lemma}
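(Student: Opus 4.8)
The plan is to exploit the boundary regularity of solutions to the constant-coefficient system $L u_0 = 0$ with $u_0 = 0$ on a piece of a Lipschitz graph, together with the fact that $u_0$ vanishes on $\Delta_{3/2}$, to produce a ``gain'' of $s^{1+\sigma}$ in the pointwise size of $u_0(x',\psi(x')+s)$ as one approaches the boundary. Concretely, first I would recall that since $L$ has constant coefficients satisfying (\ref{Hardamard-condition}) with $\bar A^* = \bar A$, the boundary estimate from Lemma \ref{constant-coefficient-lemma} applies: $|\nabla u_0| \in L^{p_n + \delta}(D_1)$ with the weak reverse H\"older bound in terms of $\int_{D_{3/2}} |\nabla u_0|^2$. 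Since $u_0 = 0$ on $\Delta_{3/2}$, Poincar\'e's inequality in the transverse direction gives $|u_0(x',\psi(x')+s)| \le s^{1 - 1/q}\big(\int_0^s |\nabla u_0(x',\psi(x')+\tau)|^q\, d\tau\big)^{1/q}$ for a suitable $q$; integrating in $x'$ and using the reverse H\"older inequality for $\nabla u_0$ converts this into a bound of the left side of (\ref{5.3.1}) by $t^{p_n + 2\sigma}$ times $\int_{D_{3/2}} |\nabla u_0|^2$.

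The remaining point is to replace $\int_{D_{3/2}} |\nabla u_0|^2$ on the right by the integral of $|u_0|^{p_n}$ over $D_{3/2}$. For this I would invoke Cacciopoli's inequality (valid here by Remark \ref{Cacciopoli-remark}, since $L$ satisfies (1) and (2)) on a slightly larger box to bound $\int_{D_{3/2}} |\nabla u_0|^2$ by $C\int_{D_{7/4}} |u_0|^2$, and then control $\int_{D_{7/4}} |u_0|^2$ by $\int_{D_{3/2}} |u_0|^{p_n}$ via H\"older's inequality after an interior-to-boundary propagation. Actually a cleaner route, following \cite{Shen-2008}, is: use Lemma \ref{lemma-5.2}-type reasoning (interior estimates) to dominate $\int_{D_t}|\nabla u_0|^{p_n}$ by $\int_{D_{2t}} |u_0(x',\psi(x')+s)/s|^{p_n}$, and then run the decay estimate entirely at the level of $|u_0|$, using that $u_0$ vanishes on $\Delta_{3/2}$ so that $|u_0(x',\psi(x')+s)| \lesssim s \cdot (\nabla u_0)^*_s$ and the $s$-average of $(\nabla u_0)^*$ improves by a power of $s$ because of Lipschitz-domain square function / $L^{p_n}$ bounds in \cite{Gao-1991, Dahlberg-Kenig-Pipher-Verchota}.

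The cleanest argument, and the one I would write, proceeds by a dyadic/scaling decomposition: split $D_1$ into the dyadic layers $D_{2^{-k}}\setminus D_{2^{-k-1}}$, on each of which $d(x) \sim 2^{-k}$, apply the interior Lipschitz estimate (\ref{interior-estimate}) to get $|u_0(x)| \le |u_0(x)|$ bounded using $u_0 = 0$ on $\Delta_{3/2}$ by $2^{-k} \sup_{B(x,c2^{-k})}|\nabla u_0|$ and then by an $L^2$-average of $\nabla u_0$ over a fixed-proportion enlargement, and finally sum the geometric series in $k$ with exponent strictly bigger than $p_n$. The extra $2\sigma$ in the exponent $t^{p_n + 2\sigma}$ comes from the fact that each layer contributes $2^{-k(p_n+2\sigma)}$ rather than merely $2^{-k p_n}$, which is exactly the $W^{1/2,2} \hookrightarrow L^{p_n}$ gain, i.e.\ the $\delta > 0$ of Lemma \ref{constant-coefficient-lemma} in the guise $q = p_n + \delta$; taking $2\sigma$ any value with $0 < 2\sigma < \delta n /(n-1)$ or so suffices, and $\sigma$ then depends only on $n,m,\mu,M$.

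The main obstacle I anticipate is the self-improvement book-keeping: one needs a bound where the left side is an integral of $|u_0|^{p_n}$ over a \emph{smaller} box while the right side is the same integral over a \emph{larger} box, with a favorable power of $t$, and this requires passing through $\nabla u_0$ (where the reverse H\"older and square-function estimates live) and back to $u_0$ (via Poincar\'e, using $u_0|_{\Delta_{3/2}} = 0$) without losing the power gain. Keeping the constants uniform in the Lipschitz character $M$ — in particular making sure the intermediate domains $D_{t\rho}$ used for the square-function estimates remain Lipschitz domains with controlled character — is the delicate part, but it is handled exactly as in the proof of Lemma \ref{constant-coefficient-lemma} and Lemma 3.3 of \cite{Shen-2008}, so I would refer to those for the routine details and focus the write-up on the layer-sum that produces the exponent $p_n + 2\sigma$.
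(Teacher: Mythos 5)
Your first paragraph is essentially the paper's own proof, and it is sound: use the fact that $u_0=0$ on $\Delta_{3/2}$ and the fundamental theorem of calculus, together with a one-dimensional H\"older inequality in the transverse variable (your ``Poincar\'e in the transverse direction'' with exponent $q=p_n+\delta$, where $\delta$ is the self-improvement gain from Lemma \ref{constant-coefficient-lemma}), to obtain
$$\int_0^t\!\!\int_{|x'|<1}|u_0|^{p_n}\,dx'\,ds \le C\,t^{\,p_n+\delta/(p_n+\delta)}\Big(\int_{D_{5/4}}|\nabla u_0|^2\,dx\Big)^{p_n/2},$$
set $2\sigma=\delta/(p_n+\delta)$, and close by Cacciopoli and H\"older to replace $\big(\int|\nabla u_0|^2\big)^{p_n/2}$ with $\int|u_0|^{p_n}$ over the larger box. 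That is exactly what the authors do.

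However, the dyadic-layer argument in your last paragraph, which you announce as ``the one I would write,'' has a genuine gap. For $x$ with $d(x)\sim 2^{-k}$ you want $|u_0(x)|\le 2^{-k}\sup_{B(x,c2^{-k})}|\nabla u_0|$ by integrating from $\Delta_{3/2}$ to $x$, and then to control the sup by an $L^2$-average via the interior estimate (\ref{interior-estimate}). The first step forces the ball $B(x,c2^{-k})$ to contain the transverse segment all the way down to $\partial\Omega$, so $c\gtrsim 1$; the second step requires $3B(x,c2^{-k})\subset D_{3/2}$, so $c\lesssim 1/3$. These cannot both hold, and in fact $\nabla u_0$ is generically unbounded up to the boundary of a Lipschitz domain, so no version of (\ref{interior-estimate}) applies on a ball reaching $\partial\Omega$. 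Moreover, a sup--then--$L^2$ scheme would only hand you $2^{-kp_n}$ per layer, not $2^{-k(p_n+2\sigma)}$: the extra $2\sigma$ must come from the $L^{p_n+\delta}$-integrability of $\nabla u_0$, which your first-paragraph H\"older step uses and the sup-bound discards. Write up the first paragraph (and the Cacciopoli/H\"older closing step) and drop the dyadic variant.
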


\begin{proof} Since $u_0=0$ on $\Delta_{3r}$, it follows by
the fundamental theorem of calculus that
\begin{equation}\label{5.3.2}
\int_0^t \int_{|x^\prime|<1}|u_0(x^\prime, \psi(x^\prime) +s)|^{p_n}\, dx^\prime ds
\le C\, t^{p_n} \int_0^t\int_{|x^\prime|<1} |\nabla u_0 (x^\prime, \psi(x^\prime)+s)|^{p_n}\,
dx^\prime ds.
\end{equation}
By H\"older's inequality the right hand side of (\ref{5.3.2}) is bounded by
$$
C\, t^{p_n +\frac{\delta}{p_n +\delta}}
\left\{ \int_0^1 \int_{|x^\prime|<1} |\nabla u_0 (x^\prime, \psi(x^\prime)+s)|^{p_n +\delta}\, 
dx^\prime ds\right\}^{\frac{p_n}{p_n+\delta}}.
$$
This, together with Lemma \ref{constant-coefficient-lemma}, implies that
\begin{equation}\label{5.3.3}
\aligned
\int_0^t \int_{|x^\prime|<1} & |u_0(x^\prime, \psi(x^\prime) +s)|^{p_n}\, dx^\prime ds\\
& \le C\, t^{p_n +\frac{\delta}{p_n+\delta}}
\left\{ \int_0^{\frac54} \int_{|x^\prime|<\frac54}
|\nabla u_0(x^\prime, \psi(x^\prime)+s)|^2\, dx^\prime ds\right\}^{p_n/2}.
\endaligned
\end{equation}
Let $2\sigma=\frac{\delta}{p_n+\delta}$.
Estimate (\ref{5.3.1}) now follows from (\ref{5.3.3}) by Caciopoli's and H\"older inequalities.
\end{proof}

Let $C_0$ and $\sigma$ be given by Lemma \ref{lemma-5.3}.
Choose $t_0\in (0,1/2)$ so small that $C_0 t_0^{\sigma} <(1/2)$.
Then $C_0t_0^{p_n+2\sigma}<(1/2)t_0^{p_n +\sigma}$.

\begin{lemma}\label{lemma-5.4}
There exists $\varepsilon_0>0$, depending only on $n$, $\mu$, $\lambda$, $\tau$ and $M$, 
such that for any $0<\varepsilon\leq \varepsilon_0$,
\begin{equation}\label{5.4}
\aligned
\int_0^{t_0}\!\!\int_{|x'|<1}&
|u_\varepsilon(x',\psi(x')+t)|^{p_n} \, dx'dt\\
&\leq t_0^{p_n+\sigma} \int_0^{3c}\!\!\int_{|x'|<3}|u_\varepsilon(x',\psi(x')+t)|^{p_n} 
\, dx'dt,
\endaligned
\end{equation}
where $c=(M+10n)$,
if $u_\varep\in W^{1,2}(D_3)$, $\mathcal{L}_\varep (u_\varep)=0$ in $D_3$ and
$u_\varep =0$ on $\Delta_3$.
\end{lemma}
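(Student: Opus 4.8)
The plan is to argue by contradiction using a compactness argument, in the spirit of Avellaneda--Lin \cite{AL-1987}. Suppose the conclusion fails. Then there exist sequences $\varepsilon_k\to 0$ and $u_k\in W^{1,2}(D_3)$ with $\mathcal{L}_{\varepsilon_k}(u_k)=0$ in $D_3$, $u_k=0$ on $\Delta_3$, such that
\begin{equation}\label{5.4.proof.norm}
\int_0^{3c}\!\!\int_{|x'|<3}|u_k(x',\psi(x')+t)|^{p_n}\,dx'dt=1
\quad\text{but}\quad
\int_0^{t_0}\!\!\int_{|x'|<1}|u_k(x',\psi(x')+t)|^{p_n}\,dx'dt>t_0^{p_n+\sigma}.
\end{equation}
First I would establish uniform bounds: by Cacciopoli's inequality (\ref{Cacciopoli-inequality}) together with Lemma \ref{lemma-5.2} (applied on slightly smaller boxes) and the interior estimate (\ref{interior-estimate}), the sequence $\{u_k\}$ is bounded in $W^{1,2}(D_{3-\eta})$ for any small $\eta>0$ (and in fact $\{\nabla u_k\}$ is bounded in $L^{p_n}$ away from the lateral boundary, but I only need the $W^{1,2}$ bound here). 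Hence, passing to a subsequence, $u_k\rightharpoonup u_0$ weakly in $W^{1,2}_{loc}$ and, by Rellich, strongly in $L^2_{loc}$ and in $L^{p_n}_{loc}$ (using the higher integrability of $\nabla u_k$ near the boundary layer to upgrade $L^2$ convergence to $L^{p_n}$ convergence on the relevant region $\{(x',\psi(x')+t):|x'|<3,\,0<t<3c\}$ minus a neighborhood of $|x'|=3$; a small tail estimate handles the rest).

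The next step is to identify the limit. By the standard homogenization theorem for the sequence $\mathcal{L}_{\varepsilon_k}$ (periodic, with coefficients in $\mathcal{M}(\mu,\lambda,\tau)$, hence with a well-defined homogenized matrix $\widehat A$), the limit $u_0$ satisfies $L_0(u_0)=0$ in $D_{3/2}$ where $L_0=-\text{div}(\widehat A\nabla)$, and $u_0=0$ on $\Delta_{3/2}$. The homogenized matrix $\widehat A$ is constant, symmetric since $A=A^*$, and satisfies the Legendre--Hadamard condition (\ref{Hardamard-condition}) — in fact it inherits the strong ellipticity on symmetric matrices from $\mathcal M(\mu,\lambda,\tau)$, which is enough for (\ref{Hardamard-condition}) with a possibly smaller constant $\mu$, this being exactly the hypothesis of Lemma \ref{lemma-5.3}. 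Now I apply Lemma \ref{lemma-5.3} to $u_0$ with $t=t_0$:
\begin{equation}\label{5.4.proof.apply}
\int_0^{t_0}\!\!\int_{|x'|<1}|u_0(x',\psi(x')+s)|^{p_n}\,dx'ds
\le C_0\,t_0^{p_n+2\sigma}\int_0^{3/2}\!\!\int_{|x'|<\frac32}|u_0(x',\psi(x')+s)|^{p_n}\,dx'ds
\le C_0\,t_0^{p_n+2\sigma},
\end{equation}
since the right-hand integral is at most the full normalized integral, which is $\le 1$ by the $L^{p_n}$ convergence and (\ref{5.4.proof.norm}). By the choice of $t_0$ (namely $C_0t_0^\sigma<1/2$, so $C_0t_0^{p_n+2\sigma}<\frac12 t_0^{p_n+\sigma}$), the right side of (\ref{5.4.proof.apply}) is strictly less than $\frac12 t_0^{p_n+\sigma}$. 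On the other hand, the $L^{p_n}$ convergence $u_k\to u_0$ on $\{|x'|<1,\,0<t<t_0\}$ and the lower bound in (\ref{5.4.proof.norm}) force $\int_0^{t_0}\int_{|x'|<1}|u_0|^{p_n}\ge t_0^{p_n+\sigma}$, a contradiction. This yields the existence of $\varepsilon_0>0$ as claimed.

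The main obstacle is the convergence step: one must justify that $u_k\to u_0$ \emph{strongly in $L^{p_n}$} on the boundary layer $\{(x',\psi(x')+t):|x'|<3,\ 0<t<3c\}$, not merely in $L^2$, because the inequality we want to violate is stated in the $L^{p_n}$ norm. The $L^2\to L^{p_n}$ upgrade near the lateral portion $|x'|=3$ is delicate since we have no uniform control of $\nabla u_k$ up to that part of the boundary; the remedy is to prove the lemma on $D_3$ but use interior-type bounds only on $D_{3-\eta}$, exploiting that the statement's right-hand side integrates over the full $D_{3c}$-box so a thin outer tail contributes negligibly, and then absorb the tail after first passing to the limit on the slightly smaller region. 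The second point requiring care is checking that the qualitative homogenization $\mathcal{L}_{\varepsilon_k}\to L_0$ passes to weak limits of solutions with zero Dirichlet data on $\Delta_3$ — this is classical (Tartar's div–curl / oscillating test functions, or the corrector method of \cite{AL-1987}), and the symmetry and Legendre--Hadamard properties of $\widehat A$ needed to invoke Lemma \ref{lemma-5.3} follow from the corresponding properties of $A$.
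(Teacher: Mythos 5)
Your overall strategy is right --- a compactness/contradiction argument in the spirit of Avellaneda--Lin, feeding Lemma \ref{lemma-5.3} through the homogenized limit --- and it is indeed what the paper does. But as written, your contradiction proves a strictly weaker statement than the lemma, and that is a genuine gap.

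The lemma asserts that $\varepsilon_0$ depends only on $n,\mu,\lambda,\tau,M$, \emph{not} on the particular coefficient matrix $A\in\mathcal{M}(\mu,\lambda,\tau)$ nor on the particular Lipschitz graph $\psi$ with $\|\nabla\psi\|_\infty\le M$. Your contradiction sequence varies only $\varepsilon_k$ and $u_k$, keeping $A$ and $\psi$ implicitly fixed (you write $\mathcal{L}_{\varepsilon_k}$, $D_3$, $\Delta_3$ without subscripts on the operator or the boundary function). If the hypothesis were negated at that level of quantification, you would only obtain $\varepsilon_0=\varepsilon_0(A,\psi)$. To get uniformity, the negation must produce sequences $A^k\in\mathcal{M}(\mu,\lambda,\tau)$ and Lipschitz functions $\psi_k$ with $\|\nabla\psi_k\|_\infty\le M$, $\psi_k(0)=0$, in addition to $\varepsilon_k\to 0$. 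One then has to (i) extract a convergent subsequence of the homogenized matrices $(b_{ij}^{\alpha\beta,k})$ --- possible because these lie in $\mathcal{M}(\widetilde\mu,\lambda,\tau)$ for a $\widetilde\mu$ depending only on $\mu$, hence are bounded --- obtaining a limit constant matrix $\bar A$ to which Lemma \ref{lemma-5.3} applies; (ii) extract a uniformly convergent subsequence of $\psi_k$ by Ascoli--Arzel\`a; and (iii) flatten the boundary via $v_k(x',t)=u_k(x',\psi_k(x')+t)$ so that the compactness takes place on a \emph{fixed} box $Q_2$. These are exactly the moves the paper makes, and without them the uniformity in the constant is lost. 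Note also that with varying $\psi_k$, the choice of $t_0$ and $\sigma$ in Lemma \ref{lemma-5.3} must be made for the \emph{limiting} ellipticity constant $\widetilde\mu$, as the paper remarks.

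By contrast, your worry about upgrading $L^2$ to $L^{p_n}$ convergence near the lateral edge $|x'|=3$ is a non-issue. After flattening and applying Cacciopoli on the inner box, one has a uniform $W^{1,2}(Q_2)$ bound, and since $p_n=2n/(n-1)<2^*=2n/(n-2)$ the embedding $W^{1,2}(Q_2)\hookrightarrow L^{p_n}(Q_2)$ is compact. The inequality used to reach the contradiction involves integrals over $\{|x'|<1,\,0<t<t_0\}$ and $\{|x'|<3/2\}$ (or $|x'|<2$), all compactly inside the region of strong $L^{p_n}$ convergence, so no tail estimate near $|x'|=3$ is needed; the large box appears only through the normalization, which passes to the limit by lower semicontinuity.
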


\begin{proof}
We will prove the lemma by contradiction. 
For any $k\in \mathbb{N}$, denote
\begin{equation} 
\aligned
&D_r^k=\big\{(x^\prime,x_n): \ |x^\prime|<r \text{ and } 
 \psi_k(x^\prime)<x_n<\psi_k(x^\prime)+(M+10n)r\big\},\\
& \Delta_r^k=\big\{(x^\prime,x_n): \ |x^\prime|<r \text{ and }  x_n=\psi_k(x^\prime)\big\}, \nonumber
\endaligned
\end{equation}
where $\|\nabla \psi_k\|_\infty\leq M$ and $\psi_k(0)=0$.
Assume that there exist $\{\mathcal{L}^{(k)}\}$,
$\{\varepsilon_k\}$, $\{ \psi_k\}$ 
and $\{u_{\varepsilon_k}\}$ such that $\varepsilon_k\to 0$ as $k\to \infty$,
\begin{align}
\mathcal{L}_{\varepsilon_k}^{(k)}(u_{\varepsilon_k})
=-\text{\rm div} \Big(A^k \Big(\frac{x}{\varep_k}\Big)\nabla u_k\Big)=0
 \quad {\rm in}\  D_{3}^k ,  \quad  u_{\varepsilon_k}=0 \quad {\rm on} \  \Delta_{3}^k,
\end{align}
\begin{align}\label{5.4.1}
\int_0^{3c}\!\!\!\int_{|x'|<3}|u_{\varepsilon_k}(x^\prime,\psi_k(x^\prime)+t)|^{p_n} 
\, {d}x^\prime{d}t=1,
\end{align}
and
\begin{align}\label{5.4.2}
\int_0^{t_0}\!\!\!\int_{|x'|<1}|u_{\varepsilon_k}(x^\prime,
\psi_k (x^\prime)+t)|^{p_n} \, { d}x'{ d}t>t_0^{p_n+\sigma},
\end{align}
where  the coefficient matrices $A^k=\big(a_{ij}^{\alpha\beta, k} (y)\big)
 \in \mathcal{M}(\mu, \lambda, \tau)$.

Let
\begin{align}
b_{ij}^{\alpha\beta,k}=\int_{[0,1]^n}\Big[
a_{ij}^{\alpha\beta, k} +a_{i\ell}^{\alpha\gamma, k}
\frac{\partial}{\partial y_\ell} \big( \chi_j^{\gamma\beta,k}\big)\Big]\, dy,
\end{align}
where  $\chi^k (y)=(\chi^{\alpha\beta,k}_j (y))_{1\leq \alpha,\beta,j\leq n}$
 are correctors for $\mathcal{L}_\varep^{(k)}$.  
Note that $b_{ij}^{\alpha\beta,k}$ are bounded. Hence, by passing to a subsequence, we may
suppose that
\begin{align}
b_{ij}^{\alpha\beta}=\lim\limits_{k\to \infty}b_{ij}^{\alpha\beta,k}
\end{align}
exists for $1\le i,j,\alpha, \beta\leq n$.  
Since each $(b_{ij}^{\alpha\beta, k})\in \mathcal{M}(\widetilde{\mu}, \lambda, \tau)$
for some $\widetilde{\mu}$ depending only on $\mu$ 
(see e.g. \cite[p.202]{Cioranescu}), so does the matrix $(b_{ij}^{\alpha\beta})$.
We remark that $t_0$ and $\sigma$ should be chosen for this $\widetilde{\mu}$.

Since the sequence $\{\psi_k\}$ is equi-continuous on $\{x'\in \mathbb{R}^{n-1}:\, |x'|\leq 5\}$ 
and $\psi_k(0)=0$, by the  Ascoli-Arzela  theorem, 
we may assume that  $\psi_k$ converges uniformly to $\psi_0$ on $\{x':\, |x'|\leq 5\}$. 
We also have that $\|\nabla \psi_0\|_\infty\leq M$ and $\psi_0(0)=0$.
Let $v_k (x^\prime, t)=u_k (x^\prime, \psi_k (x^\prime)+t)$
and $ Q_r=\{(x',t):\, |x'|<r \text{ and } 0<t<cr\}$. 
Note that by Cacciopoli's inequality and (\ref{5.4.1}), $\{ v_k\}$ is 
uniformly bounded in $W^{1,2}(Q_{2})$.
Thu, by passing to a subsequence, we may assume that $v_k\to v_0$ weakly
in $W^{1,2}(Q_{2})$. 
Since $W^{1,2}(Q_2)$ is compactly embedded in $L^{p_n}(Q_2)$,  
we may assume that $v_k\to v_0$ strongly in $L^{p_n}(Q_2)$.
In view of (\ref{5.4.1}) and (\ref{5.4.2})
we obtain
\begin{equation}\label{5.4.3}
\aligned
& \int_0^2 \int_{|x^\prime|<2} |v_0(x^\prime, t)|^{p_n}\, dx^\prime dt \le 1,\\
& \int_0^{t_0} \int_{|x^\prime|<1} |v_0(x^\prime, t)|^{p_n}\, dx^\prime dt \ge t_0^{p_n+\sigma}.
\endaligned
\end{equation}

Now, let $w(x^\prime, x_n)=v_0(x^\prime, x_n-\psi_0 (x^\prime))$.
Then $w\in W^{1,2}(\widetilde{D}_2)$ and $w=0$ on $\widetilde{\Delta}_2$, where
$\widetilde{D}_r$ and $\widetilde{\Delta}_r$ are defined as in (\ref{definition-of-Delta}),
but with $\psi$ replaced by $\psi_0$.
Let $L=-\text{div}(\bar{A}\nabla)$, where $\bar{A}=(b_{ij}^{\alpha\beta})$.
It follows from the theory of homogenization that $L(w)=0$ in $D_2$ (see e.g. 
\cite[Lemma 2.1]{Kenig-Lin-Shen-1}).
In view of Lemma \ref{lemma-5.3} and (\ref{5.4.3}) we obtain
\begin{equation}\label{5.4.4}
\aligned
\int_0^{t_0}\int_{|x^\prime|<1} & |w (x^\prime, \psi_0 (x^\prime) +t)|^{p_n}\, dx^\prime dt\\
& \le C_0 t_0^{p_n +2\sigma}
\int_0^2 \int_{|x^\prime|<2} |w(x^\prime, \psi_0(x^\prime)+t)|^{p_n}\, dx^\prime dt\\
&\le (1/2) t_0^{p_n +\sigma},
\endaligned
\end{equation}
which contradicts the second inequality in (\ref{5.4.3}).
This completes the proof.
\end{proof}

\begin{lemma}\label{lemma-5.5}
Let $\varep_0>0$ be given by Lemma \ref{lemma-5.4}.
There exist positive constants $\delta$ 
and $C$, depending only on $n, \mu,\tau, \lambda$ and $M$,
such that for  $(\varepsilon/\varepsilon_0)<t<1$,
\begin{equation}\label{5.5.1}
\aligned
\int_0^t\!\!\!\int_{|x'|<1}&|u_\varepsilon(x',\psi(x')+s)|^{p_n}\, dx'ds\\
&\leq Ct^{p_n+\delta}\int_0^{3c}\!\!\!\int_{|x'|<3}
|u_\varepsilon(x',\psi(x')+s)|^{p_n}\, dx' ds,
\endaligned
\end{equation}
whenever $u_\varep\in W^{1,2}(D_{3})$, $\mathcal{L}_\varep (u_\varep)=0$ in $D_{3}$
and $u_\varep=0$ on $\Delta_{3}$.
\end{lemma}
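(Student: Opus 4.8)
The plan is to obtain the estimate for a general $t$ from Lemma~\ref{lemma-5.4} (which is exactly the case $t=t_0$) by iterating over a geometric sequence of scales, using the scale invariance of both the class $\mathcal{M}(\mu,\lambda,\tau)$ and the condition $\|\nabla\psi\|_\infty\le M$. For $t\in[t_0,1)$ there is nothing to prove: since $\{|x'|<1,\ 0<s<t\}\subset D_3$ the left-hand side is at most $\int_0^{3c}\int_{|x'|<3}|u_\varep|^{p_n}$, and $t^{p_n+\delta}\ge t_0^{p_n+\delta}$, so the inequality holds once $C\ge t_0^{-(p_n+\delta)}$. Hence it suffices to treat $\varep/\varep_0<t<t_0$.

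The basic building block is a localized, rescaled version of Lemma~\ref{lemma-5.4}. Fix a scale $\rho$ with $\varep/\varep_0\le\rho\le1$ and a point $z'$ with $|z'|$ small enough that the boxes below stay inside $D_3$, and put $v(x)=u_\varep(z'+\rho x',\psi(z')+\rho x_n)$. Then $v$ solves a system $-\text{div}\big(B(\rho x/\varep)\nabla v\big)=0$ with $B\in\mathcal{M}(\mu,\lambda,\tau)$ (a shift of $A$), i.e. a system of type $\mathcal{L}_{\varep/\rho}$, over the region cut out by the $M$-Lipschitz graph $x'\mapsto\rho^{-1}[\psi(z'+\rho x')-\psi(z')]$, and $v$ vanishes on its flat portion. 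Its homogenization parameter $\varep/\rho$ is $\le\varep_0$ precisely when $\rho\ge\varep/\varep_0$; this is the only place the hypothesis $t>\varep/\varep_0$ enters, and it guarantees that we never zoom past the scale at which Lemma~\ref{lemma-5.4} holds. Applying Lemma~\ref{lemma-5.4} to $v$ (the constants $\varep_0,t_0,\sigma$ being uniform over this class) and undoing the substitution yields the one-step decay
\begin{equation*}
\int_0^{\rho t_0}\!\!\int_{|y'-z'|<\rho}\!|u_\varep(y',\psi(y')+s)|^{p_n}\,dy'\,ds
\ \le\ t_0^{p_n+\sigma}\int_0^{3c\rho}\!\!\int_{|y'-z'|<3\rho}\!|u_\varep(y',\psi(y')+s)|^{p_n}\,dy'\,ds .
\end{equation*}

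I would then iterate this, starting at scale $1$ and descending to a scale comparable to $t$, so that the layer of height $t$ over $\{|x'|<1\}$ gets compared with the reference integral over $D_3$, picking up a factor $t_0^{p_n+\sigma}$ at each step. Two features must be handled. First, the reference box $\{|y'-z'|<3\rho,\ 0<s<3c\rho\}$ is much taller in the normal direction than the thin layer $\{|y'-z'|<\rho,\ 0<s<\rho t_0\}$, so at each stage the portion with $s$ of order $\rho$ or larger is split off and estimated by the interior bound (\ref{interior-estimate}) (equivalently Lemma~\ref{lemma-5.2}), which there controls $|u_\varep|$ pointwise by its average over a ball of radius comparable to $s$; this portion is reabsorbed into the reference integral at the next coarser scale. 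Second, to return from the localized estimates to $\int_0^t\int_{|x'|<1}$ one covers $\{|x'|<1\}$ by balls of the current radius with bounded overlap, which costs only a dimensional constant, not the number of balls. The dimensional constants produced this way, together with the factor from the fixed ratio of consecutive reference scales, are absorbed by shrinking the exponent to some $\delta\in(0,\sigma)$ and by taking $t_0$ in Lemma~\ref{lemma-5.4} as small as needed — which is allowed, since there $t_0$ is only required to satisfy $C_0 t_0^\sigma<1/2$. After $\sim\log(1/t)/\log(1/t_0)$ steps this gives $\int_0^t\int_{|x'|<1}|u_\varep|^{p_n}\le Ct^{p_n+\delta}\int_0^{3c}\int_{|x'|<3}|u_\varep|^{p_n}$ with $C,\delta$ depending only on $n,\mu,\tau,\lambda,M$.

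The step I expect to be the main obstacle is exactly this geometric bookkeeping. Because Lemma~\ref{lemma-5.4} compares a boundary layer that is thin normal to $\partial\Omega$ with a reference region that is not, the rescaled estimates do not chain together on their own; one has to peel off and separately control — via interior estimates — the part of each reference region lying away from the boundary, and to organize the coverings so that the radius never passes the ceiling fixed by the domain $D_3$. Doing this while keeping the constants under enough control that the final exponent is $p_n+\delta$, and not something smaller, is the delicate point; it is carried out as in the corresponding iteration in \cite{Shen-2008}.
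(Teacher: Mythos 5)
Your overall plan — reduce to $t<t_0$, rescale Lemma~\ref{lemma-5.4} to a unit of size $\rho\ge\varepsilon/\varepsilon_0$ (so that $\varepsilon/\rho\le\varepsilon_0$), cover $\{|x'|<1\}$ by $\rho$-balls with bounded overlap, iterate over a geometric sequence of scales, and shrink the final exponent to some $\delta\in(0,\sigma)$ while taking $t_0$ small enough to absorb the dimensional constants — is indeed the rescaling--iteration argument that the paper cites from \cite[pp.2294--2295]{Shen-2008}. The identification of where the hypothesis $t>\varepsilon/\varepsilon_0$ enters (one may never zoom to a scale below $\varepsilon/\varepsilon_0$) is also correct.

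The one place your sketch goes wrong is the ``main obstacle'' you flag, namely the height mismatch between the thin layer $\{0<s<\rho t_0\}$ and the reference box $\{0<s<3c\rho\}$. You propose to peel off the part with $s\gtrsim\rho$ and control it with the interior estimate~(\ref{interior-estimate}) (or Lemma~\ref{lemma-5.2}). That does not work: (\ref{interior-estimate}) and~(\ref{5.2.1}) control $|\nabla u_\varepsilon|$, not $|u_\varepsilon|$, and a pointwise bound of $|u_\varepsilon|^{p_n}$ by its own local average gives nothing. The standard (and, I believe, intended) fix requires no peeling at all: choose the geometric ratio $\theta=t_0/(3c)$ rather than $t_0$. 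Then, with $g(h)=\int_0^h\int_{|x'|<2}|u_\varepsilon(x',\psi(x')+s)|^{p_n}\,dx'\,ds$, the covered, rescaled Lemma~\ref{lemma-5.4} at scale $\rho=h/(3c)$ reads $g(\theta h)\le C_n t_0^{p_n+\sigma}\,g(h)$, valid as long as $h/(3c)\ge\varepsilon/\varepsilon_0$ and the outer region stays inside $D_3$ (which holds for $t_0$ small). Iterating and choosing $j$ with $\theta^{j+1}\le t<\theta^j$ gives $g(t)\le C\,t^{\beta}\,g(3c)$ with $\beta=\log(C_n t_0^{p_n+\sigma})/\log\theta\to p_n+\sigma$ as $t_0\to 0$, hence $\beta=p_n+\delta$ for some $\delta>0$ once $t_0$ is fixed small enough; the constraint $C_0t_0^\sigma<1/2$ from Lemma~\ref{lemma-5.3} is compatible with taking $t_0$ as small as needed (and $\varepsilon_0$ is then whatever Lemma~\ref{lemma-5.4} yields for that $t_0$). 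So keep your framework, but replace the ``peel off via interior estimates'' step with the correct scale ratio $\theta=t_0/(3c)$, which makes the reference box at each scale nest inside the thin layer at the previous one and lets the one-step inequalities chain directly.
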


\begin{proof}
Lemma \ref{lemma-5.5} follows from Lemma \ref{lemma-5.4} by a rescaling-iteration argument.
We refer the reader to \cite[pp.2294-2295]{Shen-2008} for details.
\end{proof}

Finally we are in a position to give the proof of Theorem \ref{theorem-5.1}.

\begin{proof}[Proof of Theorem \ref{theorem-5.1}]

By rescaling we may assume that $r=1$.
Let $\varepsilon_0$  be given by Lemma \ref{lemma-5.4}. 
If $\varepsilon\geq\epsilon_0/4$, estimate (\ref{4.1.1}) 
follows directly from Theorem \ref{boundary-Holder-theorem}.
Now we suppose that $\varepsilon<\varepsilon_0/4$.  
Observe that $v(x)=u_\varepsilon(\varepsilon x)$ is a weak solution of
${\mathcal L}_1 (v)=0$. Thus by Hardy's inequality and Theorem \ref{boundary-Holder-theorem},
\begin{equation}\label{5.6.1}
\aligned
\int_0^{\varepsilon /\epsilon_0}\!\!&
\int_{|x'|<\varepsilon /{\varepsilon_0}}
\Big|\frac{u_\varepsilon(x',\psi(x')+s)}{s}\Big|^{p_n}\, dx' ds\\
&\le C \int_0^{\varep/\varep_0} \int_{|x^\prime|<\varep/\varep_0}
|\nabla u_\varep (x^\prime, \psi(x^\prime) +s)|^{p_n}\, 
dx^\prime ds\\
&\leq \frac{C}{(\varepsilon )^{p_n}}\int_0^{c\varepsilon /\varepsilon_0}
\!\!\int_{|x'|<2\varepsilon /{\varepsilon_0}}
\big|u_\varepsilon(x',\psi(x')+s)\big|^{p_n}\, dx'{d}s.
\endaligned
\end{equation}
By covering $\Delta_1$ with surface balls of radius $\varepsilon /{\varepsilon_0}$, 
we can deduce from
(\ref{5.6.1}) that
\begin{equation}\label{5.6.2}
\aligned
\int_0^{\varepsilon /\varepsilon_0}\!\!&\int_{|x'|<1}
\Big|\frac{u_\varepsilon(x',\psi(x')+s)}{s}\Big|^{p_n}\, dx'ds\\
&\leq \frac{C}{\varepsilon^{p_n}}\int_0^{c\varepsilon /\varepsilon_0}\!\!
\int_{|x'|<2}\big|u_\varepsilon(x',\psi(x')+s)\big|^{p_n}\, { d}x'{ d}s\\
&\leq C\int_0^{3c}\!\!\int_{|x'|<3}
\big|u_\varepsilon(x',\psi(x')+s)\big|^{p_n}\, { d}x'{d}s,
\endaligned
\end{equation}
where we have used Lemma \ref{lemma-5.5} for the last inequality .

Next, we denote $f(x',s)=s^{-1}u_\varepsilon(x',\psi(x')+s)$ and write
\begin{equation}\label{5.6.3}
\aligned
&\int_0^c\!\!\!\int_{|x'|<1}|f(x',s)|^{p_n}\, { d}x'{d}s\\
&=\Big\{\int_0^{\varepsilon /{\varepsilon_0}}\!\!\int_{|x'|<1}
+\sum\limits_{j=1}^{j_0}\int_{2^{j-1}\varepsilon 
/{\epsilon_0}}^{2^j\varepsilon /{\varepsilon_0}}\!\!\int_{|x'|<1}
+\int_{2^{j_0}\varepsilon /{\varepsilon_0}}^c\!\!\int_{|x'|<1} 
\Big\}|f(x',s)|^{p_n}\, { d}x'{ d}s,
\endaligned
\end{equation}
where $2^{-j_0-1}\leq \varepsilon/{\varepsilon_0}\leq 2^{-j_0}$. 
The first term in the right hand side of (\ref{5.6.3})
 is handled by (\ref{5.6.2}).  Now we
apply (\ref{5.5.1}) to estimate the second term. This gives
\begin{equation}\label{5.6.4}
\aligned
\sum\limits_{j=1}^{j_0}&\int_{2^{j-1}\varepsilon 
/{\varepsilon_0}}^{2^j\varepsilon /{\varepsilon_0}}\!\!\int_{|x'|<1}|f(x',s)|^{p_n}
\, { d}x'{ d}s\\
&\leq C\sum\limits_{j=1}^{j_0} \Big({2^{j-1}\frac{\varepsilon}{{\varepsilon_0}}}\Big)^{-p_n} 
\Big( 2^j \frac{\varepsilon}{\varepsilon_0}\Big)^{p_n+\delta}
\int_0^{3c}\!\!\!\int_{|x'|<3}|u_\varepsilon(x',\psi(x')+s)|^{p_n}\, { d}x'{ d}s\\
&\leq C\int_0^{3c}\!\!\!\int_{|x'|<3}|u_\varepsilon(x',\psi(x')+s)|^{p_n}\, {d}x'{ d}s,
\endaligned
\end{equation}
where in the last inequality we has used  
$2^{-j_0-1}\leq \varepsilon/{\varepsilon_0}\leq 2^{-j_0}$.

Finally, the last term in (\ref{5.6.3}) is  controlled by
\begin{align}
C\int_0^{c}\!\!\!\int_{|x'|<1}|u_\varepsilon(x',\psi(x')+s)|^{p_n}\, {d}x'{ d}s.
\end{align}
Therefore, we have shown that
\begin{align}\label{5.6.6}
\int_0^1\!\!\!\int_{|x'|<1}\Big|\frac{u_\varepsilon(x',\psi(x')+s)}{s}\Big|^{p_n}
\, { d}x'{ d}s\leq  C\int_{D_{3}}|u_\varep(x)|^{p_n}\, { d}x.
\end{align}
In view of Lemma \ref{lemma-5.2} this implies that
\begin{equation}\label{5.6.7}
\int_{D_1} |\nabla u_\varep|^{p_n}\, dx 
 \le C \int_{D_3} |u_\varep|^{p_n}\, dx\\
\le C \left\{\int_{D_3} |\nabla u_\varep|^2\, dx \right\}^{p_n/2},
\end{equation}
where the last step follows from the Sobolev's inequality.
This completes the proof of Theorem \ref{theorem-5.1}.
\end{proof}

\medskip

\noindent{\bf Acknowledgment.}
Jun Geng and Zhongwei Shen were supported in part by NSF grant DMS-0855294.
Liang Song was supported in part by NNSF of China (Grant No.  11001276).

\bibliography{gss}

\bigskip

\begin{flushleft}
Jun Geng, 
Department of Mathematics, 
University of Kentucky,
Lexington, Kentucky 40506,
USA.
\quad
E-mail: jgeng@ms.uky.edu
\end{flushleft}

\bigskip

\begin{flushleft}
Zhongwei Shen (corresponding author), Department of Mathematics,
University of Kentucky,
Lexington, Kentucky 40506,
USA. Fax: 1-859-257-4078.

E-mail: zshen2@uky.edu
\end{flushleft}

\bigskip

\begin{flushleft}
 Liang Song,
Department of Mathematics,
Sun Yat-sen (Zhongshan) University,
Guangzhou, 510275,
P. R. China. \quad
E-mail: songl@mail.sysu.edu.cn
\end{flushleft}

\medskip

\noindent \today

\end{document}